\documentclass[twoside,12pt,letter]{article}

\usepackage[margin=0.7in]{geometry}
\usepackage{amsmath, amssymb, amsthm}
\usepackage{bm}
\usepackage{hyperref}
\usepackage{enumitem}
\usepackage[dvipsnames]{xcolor}
\usepackage{setspace}
\usepackage{float}
\usepackage{graphicx}
\usepackage{tikz}
\usepackage{cite}
\usetikzlibrary{arrows.meta}

\hypersetup{
colorlinks=true,
linkcolor=MidnightBlue,
urlcolor=MidnightBlue,
citecolor=MidnightBlue
}

\theoremstyle{plain}
\newtheorem{theorem}{Theorem}[section]
\newtheorem{proposition}[theorem]{Proposition}
\newtheorem{lemma}[theorem]{Lemma}
\newtheorem{corollary}[theorem]{Corollary}

\theoremstyle{definition}
\newtheorem{example}[theorem]{Example}

\theoremstyle{remark}
\newtheorem{remark}[theorem]{Remark}

\numberwithin{equation}{section}

\title{A capacitary approach to Lyapunov-type inequalities for elliptic problems on weighted graphs}

\author{Mohamed Jleli\textsuperscript{1}, Bessem Samet\textsuperscript{1,*}}

\date{}

\begin{document}

\maketitle

\begingroup
\renewcommand{\thefootnote}{1}
\footnotetext{Department of Mathematics, College of Science, King Saud University, Riyadh 11451, Saudi Arabia.}

\renewcommand{\thefootnote}{*}
\footnotetext{Corresponding author.\\ E-mail addresses: jleli@ksu.edu.sa (M. Jleli), bsamet@ksu.edu.sa (B. Samet)}
\endgroup

\begin{abstract}
We initiate the study of Lyapunov-type inequalities for Dirichlet problems
driven by the discrete \(p\)-Laplacian on weighted graphs. The approach is
capacitary and is based on point \(p\)-capacities and the associated
capacitary radii. First, we prove general Lyapunov-type inequalities on
arbitrary connected locally finite weighted graphs. These inequalities provide
intrinsic lower bounds, expressed in terms of the capacitary radii, for the
positive part of the potential whenever the corresponding Dirichlet problem
admits a nontrivial solution. Next, we estimate these capacitary radii in
several geometric settings and prove the sharpness of the resulting
Lyapunov-type inequalities. As an application, we derive lower bounds for the
first weighted Dirichlet eigenvalue of the discrete \(p\)-Laplacian.
\end{abstract}
\begin{small} 
\noindent\textbf{Keywords:} Lyapunov-type inequalities; discrete
\(p\)-Laplacian; weighted graphs; capacitary radii;\\
Dirichlet eigenvalues

\medskip 

\noindent\textbf{2020 Mathematics Subject Classification:} 39A12, 35J92, 05C63, 35P15, 31C20  
\end{small}

\section{Introduction}

Lyapunov-type inequalities play an important role in the qualitative theory of
ordinary differential equations. They provide necessary conditions for the
existence of nontrivial solutions of boundary value problems and are closely
related to stability, oscillation, disconjugacy, and eigenvalue estimates.

In its classical form, Lyapunov's inequality can be stated as follows. If
\(w\in C([a,b])\) and the boundary value problem
\[
\left\{
\begin{array}{ll}
-y''(t)=w(t)y(t), & t\in(a,b),\\[2mm]
y(a)=y(b)=0,
\end{array}
\right.
\]
admits a nontrivial solution, then
\begin{equation}\label{ST-LYINQ}
\frac{4}{b-a}<\int_a^b |w(t)|\,dt.
\end{equation}
The constant \(4\) in \eqref{ST-LYINQ} is sharp, in the sense that it cannot
be replaced by any larger constant. Moreover, by the Sturm comparison theorem, the
term \(|w|\) can be replaced by the positive part \(w_+\), giving
\[
\frac{4}{b-a}<\int_a^b w_+(t)\,dt.
\]

Although \eqref{ST-LYINQ} is commonly attributed to Lyapunov, the original
result of Lyapunov~\cite{Lyapunov} was formulated in terms of the stability of periodic
differential equations.  Borg~\cite{Borg} later gave what appears to be the first proof of Lyapunov's
inequality in the literature, in an equivalent form due to Beurling. He then
used this inequality to provide a shorter proof of Lyapunov's stability theorem. For further developments of Lyapunov-type inequalities for ordinary
differential equations, together with several applications, we refer to the
survey by Brown and Hinton~\cite{BrownHinton2000}.

Lyapunov's inequality has been considerably generalized and has inspired numerous
extensions to different classes of problems, including nonlinear ordinary
differential equations. For instance, Pinasco~\cite{Pinasco-04} studied the
one-dimensional \(p\)-Laplacian problem
\[
\left\{
\begin{array}{ll}
-\bigl(|u'|^{p-2}u'\bigr)'=w(t)|u|^{p-2}u, & t\in(a,b),\\[2mm]
u(a)=u(b)=0,
\end{array}
\right.
\]
where \(1<p<\infty\) and \(w\) is a bounded positive function. He proved that
if this problem admits a nontrivial weak solution \(u\in W_0^{1,p}(a,b)\), then
\[
\frac{2^p}{(b-a)^{p-1}}
\le
\int_a^b w(t)\,dt.
\]
For \(p=2\), the above inequality  coincides with \eqref{ST-LYINQ} in the case \(w\ge0\).

Other one-dimensional nonlinear extensions involve nonlinear functional
differential equations~\cite{Eliason}, nonlinear operators in Orlicz
spaces~\cite{deNapoliPinasco2005}, the \(\psi\)-Laplacian
operator~\cite{Sanchez}, and the Minkowski-curvature
operator~\cite{Liu,Liu2,Yang-19}.

Lyapunov-type inequalities have also been studied in higher-dimensional
settings. These include linear partial differential equations with Neumann
boundary conditions~\cite{Canada}, \(p\)-Laplacian equations with Dirichlet
boundary conditions~\cite{deNapoliPinasco2016}, quasilinear elliptic
systems~\cite{deNapoliPinasco2006}, elliptic problems with Robin boundary
conditions~\cite{HA}, extremal Pucci equations~\cite{Tyagi}, and fractional
partial differential equations~\cite{JKS}. A survey of several results on
Lyapunov-type inequalities in both one-dimensional and higher-dimensional
settings can be found in the monograph~\cite{Pinasco-13}.

Results closely related to the present work were obtained by de N\'apoli and
Pinasco~\cite{deNapoliPinasco2016}.  They studied Lyapunov-type inequalities for the Dirichlet problem
\begin{equation}\label{LY-deP}
\left\{
\begin{array}{ll}
-\Delta_p u=w(x)|u|^{p-2}u, & x\in\Omega,\\[2mm]
u=0, & x\in\partial\Omega,
\end{array}
\right.
\end{equation}
where \(1<p<\infty\), \(\Omega\) is an open subset of \(\mathbb R^N\), and
\(w\) is a nonnegative measurable potential. Here
\[
\Delta_p u=\operatorname{div}\bigl(|\nabla u|^{p-2}\nabla u\bigr).
\]
In the case \(p>N\), they proved a lower bound for the \(L^1\)-norm of the
potential \(w\) in terms of the inner radius \(r_\Omega\) of the domain
\(\Omega\). More precisely, if the Dirichlet problem \eqref{LY-deP} admits a
nontrivial solution \(u\in W_0^{1,p}(\Omega)\), then
\[
\frac{C}{r_\Omega^{p-N}}
\le
\|w\|_{L^1(\Omega)},
\]
where \(C=C(p,N)>0\) is independent of \(\Omega\).  In the subcritical case \(1<p<N\), they obtained an \(L^s\)-version of the
Lyapunov inequality. More precisely, if \(s>N/p\) and the Dirichlet problem
\eqref{LY-deP} admits a nontrivial solution \(u\in W_0^{1,p}(\Omega)\), then
\begin{equation}\label{Pin-plN}
\frac{C}{r_\Omega^{p-\frac Ns}}
\le
\|w\|_{L^s(\Omega)},
\end{equation}
where \(C>0\) depends on \(p\), \(N\), and the capacity of
\(\mathbb R^N\setminus\Omega\).

Recently, elliptic problems on weighted graphs have attracted increasing
attention, motivated by their connections with discrete analysis, network
models, and the geometry of graphs. In particular, questions concerning the
existence, nonexistence, and qualitative properties of solutions to elliptic
equations and inequalities on graphs have been studied in several settings;
see, for instance,
\cite{DuongMinh2025,Grigoryan2018,Grigoryan2016,Gu,Meglioli,MonticelliPunzoSomaglia2025}.

To the best of our knowledge, Lyapunov-type inequalities on weighted graphs
have not yet been investigated. The aim of the present paper is to initiate
such a study for Dirichlet problems driven by the discrete \(p\)-Laplacian,
from a capacitary point of view. More precisely, we consider the problem
\[
\left\{
\begin{array}{ll}
-\Delta_p^\omega u=w(x)|u|^{p-2}u, & x\in\Omega,\\[2mm]
u=0, & x\in V\setminus\Omega,
\end{array}
\right.
\]
where \(1<p<\infty\), \((V,\omega,\mu)\) is a connected locally finite
weighted graph, \(\Omega\) is a nonempty finite subset of \(V\),
\(w:\Omega\to\mathbb R\) is a potential, and \(\Delta_p^\omega\) denotes the
discrete \(p\)-Laplacian, defined by
\[
\Delta_p^\omega u(x)
=
\frac{1}{\mu(x)}
\sum_{y\sim x}
\omega_{xy}|u(y)-u(x)|^{p-2}\bigl(u(y)-u(x)\bigr).
\]

The main feature of our approach is the introduction of capacitary radii
adapted to finite subsets of weighted graphs. These radii are defined in terms
of point \(p\)-capacities and allow us to establish general Lyapunov-type
inequalities on arbitrary connected locally finite weighted graphs.
Next, we estimate these capacitary quantities in several geometric settings and
prove the sharpness of the resulting Lyapunov-type inequalities. 

First, under
bounded-geometry, polynomial volume-growth, and relative isoperimetric
assumptions, we obtain an inner-radius Lyapunov-type inequality in the
supercritical range \(p>D\), where \(D\) is the volume-growth exponent. We also
show that the power of the inner radius in this estimate is sharp. 

Next, we study radial path graphs with polynomial edge weights and polynomial
vertex measures, where \(D\) plays the role of a volume-growth dimension, and
derive explicit Lyapunov-type inequalities in the three regimes \(p>D\),
\(p=D\), and \(1<p<D\). In this setting, we prove the optimality of the
corresponding power-type and logarithmic scales. 

Finally, we consider \(q\)-regular trees, \(q\ge3\), and obtain a uniform Lyapunov-type inequality with an explicit sharp constant.
 
As an application of the capacitary Lyapunov-type inequalities, we derive lower
bounds for the first weighted Dirichlet eigenvalue of the discrete
\(p\)-Laplacian on finite subsets of weighted graphs. These estimates are
expressed in terms of capacitary radii and are then specialized to the
geometric settings treated in this paper.

The rest of the paper is organized as follows. In Section~\ref{sec2}, we fix
notation, recall the basic definitions for weighted graphs and difference
operators, and introduce the capacitary radii used throughout the paper. In
Section~\ref{sec3}, we state the main Lyapunov-type inequalities and their
geometric consequences. In Section~\ref{sec4}, we apply these inequalities to
weighted Dirichlet eigenvalue problems. In Section~\ref{sec5}, we prove the
main results.

\section{Mathematical framework}\label{sec2}

In this section, we fix the notation and recall the basic tools that will be
used throughout the paper. We also introduce the capacitary quantities needed
in our estimates.

\subsection{The graph setting}

First, we recall some basic notions concerning weighted graphs. For further
details, we refer the reader to \cite{Grigoryan2018}. Throughout this paper,
\(\mathbb{N}\) denotes the set of positive integers, and we set
\(\mathbb{N}_0=\mathbb{N}\cup\{0\}\).

A weighted graph is a triple \((V,\omega,\mu)\), where \(V\) is a countably
infinite set, \(\mu:V\to(0,\infty)\) is a vertex measure, and
\(\omega:V\times V\to[0,\infty)\) is a symmetric edge-weight function satisfying
\[
\omega_{xy}=\omega_{yx},\qquad \omega_{xx}=0,\qquad x,y\in V.
\]

For a weighted graph \((V,\omega,\mu)\), we use the following terminology.
\begin{itemize}
\item Two vertices \(x,y\in V\) are said to be adjacent if \(\omega_{xy}>0\).
In this case, we write \(x\sim y\), and the unordered pair \(\{x,y\}\) is called
an edge of the graph.

\item The degree of a vertex \(x\in V\) is defined by
\[
\deg(x)=\#\{y\in V:\ y\sim x\}.
\]

\item A path from \(x\) to \(y\) is either the trivial path \(x=y\), of length
\(0\), or a finite sequence of vertices
\[
x=x_0,x_1,\ldots,x_m=y,
\]
where \(m\in\mathbb{N}\), such that \(x_{i-1}\sim x_i\) for every
\(i=1,\ldots,m\). The integer \(m\) is called the length of the path.

\item A path
\[
x=x_0,x_1,\ldots,x_m=y
\]
is said to be simple if the vertices \(x_0,x_1,\ldots,x_m\) are pairwise
distinct.

\item A cycle is a finite sequence of vertices
\[
x_0,x_1,\ldots,x_m=x_0,
\]
where \(m\ge 3\), such that \(x_0,x_1,\ldots,x_{m-1}\) are pairwise distinct and
\(x_{i-1}\sim x_i\) for every \(i=1,\ldots,m\).
\end{itemize}

The symmetry condition \(\omega_{xy}=\omega_{yx}\) implies that adjacency is
symmetric; hence the graph is undirected. Moreover, since \(\omega_{xx}=0\) for
every \(x\in V\), loops are excluded.

A weighted graph \((V,\omega,\mu)\) is said to be locally finite if
\[
\deg(x)<\infty,\qquad x\in V.
\]
It is said to be connected if, for every \(x,y\in V\), there exists a path from
\(x\) to \(y\).

Let \((V,\omega,\mu)\) be a connected locally finite weighted graph. The graph
distance between two vertices \(x,y\in V\) is defined by
\[
d(x,y)=\min\left\{m\in\mathbb{N}_0:\ \text{there exists a path of length }m
\text{ from }x\text{ to }y\right\}.
\]
This minimum is well defined because the graph is connected. For \(x\in V\)
and \(R\ge 0\), the ball centered at \(x\) with radius \(R\) is defined by
\[
B(x,R)=\{y\in V:\ d(x,y)\le R\}.
\]
Since the graph is locally finite, every ball \(B(x,R)\)  is finite.

\medskip

\noindent\textbf{Notation.}
Let \((V,\omega,\mu)\) be a connected locally finite weighted graph.
\begin{itemize}
\item For every finite set \(A\subset V\), we set
\[
\mu(A)=\sum_{x\in A}\mu(x),
\]
with the convention that \(\mu(\emptyset)=0\).

\item Let \(\Omega\subset V\) be a nonempty subset. For \(x\in V\), we define
\[
d(x,V\setminus\Omega)=\inf_{y\in V\setminus\Omega}d(x,y).
\]
In particular, if \(x\in\Omega\), then
\[
d(x,V\setminus\Omega)\ge1.
\]
The inner radius of \(\Omega\) is defined by
\begin{equation}\label{r-Omega}
r_\Omega=\sup_{x\in\Omega}d(x,V\setminus\Omega).
\end{equation}
If \(\Omega\) is finite, then
\[
1\le r_\Omega<\infty.
\]

\item For every nonempty finite set \(A\subset V\) and every function
\(u:V\to\mathbb R\), we denote by \(u_A\) the weighted average of \(u\) over
\(A\), namely
\[
u_A
=
\frac{1}{\mu(A)}
\sum_{x\in A}\mu(x)u(x).
\]

\item For \(s\in\mathbb R\), we denote by \(s_+\) its positive part, namely
\[
s_+=\max\{s,0\}.
\]
More generally, if \(A\subset V\) and \(f:A\to\mathbb R\), we define
\(f_+:A\to[0,\infty)\) by
\[
f_+(x)=\bigl(f(x)\bigr)_+=\max\{f(x),0\},
\qquad x\in A.
\]

\item If \(1<s<\infty\), we denote by
\[
s'=\frac{s}{s-1}
\]
the conjugate exponent of \(s\).
\end{itemize}

\subsection{Function spaces and energy}

Throughout this subsection, let \((V,\omega,\mu)\) be a connected locally finite
weighted graph.

Let \(A\subset V\) be nonempty. For \(1\le q<\infty\), we define
\[
\ell^q(A,\mu)
=
\left\{
u:A\to\mathbb{R}:\ 
\sum_{x\in A}\mu(x)|u(x)|^q<\infty
\right\},
\]
endowed with the norm
\[
\|u\|_{\ell^q(A,\mu)}
=
\left(\sum_{x\in A}\mu(x)|u(x)|^q\right)^{1/q}.
\]
We also define
\[
\ell^\infty(A)
=
\left\{
u:A\to\mathbb{R}:\ \sup_{x\in A}|u(x)|<\infty
\right\},
\]
with norm
\[
\|u\|_{\ell^\infty(A)}
=
\sup_{x\in A}|u(x)|.
\]
When \(A\) is finite, this norm is simply
\[
\|u\|_{\ell^\infty(A)}
=
\max_{x\in A}|u(x)|.
\]

Let \(1<p<\infty\). For a function \(u:V\to\mathbb{R}\), we define its
\(p\)-energy by
\[
E_p(u)
=
\frac12\sum_{x,y\in V}\omega_{xy}|u(y)-u(x)|^p,
\]
whenever the sum is finite. Equivalently, since \(\omega\) is symmetric,
\[
E_p(u)
=
\sum_{\{x,y\}:x\sim y}\omega_{xy}|u(y)-u(x)|^p.
\]

Let \(\Omega\subset V\) be a nonempty finite subset. We set
\[
X_V(\Omega)
=
\{u:V\to\mathbb{R}:\ u=0\ \text{in }V\setminus\Omega\}.
\]
If \(u\in X_V(\Omega)\), then \(u\) vanishes in \(V\setminus\Omega\). Hence
only edges having at least one endpoint in \(\Omega\) can contribute to the
energy. Therefore,
\[
E_p(u)
=
\sum_{\substack{\{x,y\}:x\sim y\\ \{x,y\}\cap\Omega\neq\emptyset}}
\omega_{xy}|u(y)-u(x)|^p.
\]
Moreover, since \(\Omega\) is finite and the graph is locally finite, the set
\[
\bigl\{\{x,y\}:x\sim y,\ \{x,y\}\cap\Omega\neq\emptyset\bigr\}
\]
is finite. Consequently, every \(u\in X_V(\Omega)\) has finite \(p\)-energy.

We equip \(X_V(\Omega)\) with the energy norm
\[
\|u\|_{X_V(\Omega),p}=E_p(u)^{1/p}.
\]
This is a norm on \(X_V(\Omega)\). Indeed, homogeneity is immediate, and the
triangle inequality follows from Minkowski's inequality applied to the edge
differences. Finally, if
\[
\|u\|_{X_V(\Omega),p}=0,
\]
then \(u(y)=u(x)\) whenever \(x\sim y\). Since the graph is connected, \(u\) is
constant in \(V\). Since \(u=0\) in \(V\setminus\Omega\), this constant is zero.
Thus, \(u\equiv 0\) in \(V\).

\subsection{Difference operators and the \(p\)-Laplacian}

Let \((V,\omega,\mu)\) be a connected locally finite weighted graph, and let
\(u:V\to\mathbb{R}\). For \(x,y\in V\), we set
\[
\nabla_{xy}u=u(y)-u(x).
\]

Let \(1<p<\infty\). The weighted \(p\)-Laplacian of \(u\) is defined by
\[
\Delta_p^\omega u(x)
=
\frac{1}{\mu(x)}
\sum_{y\sim x}
\omega_{xy}|\nabla_{xy}u|^{p-2}\nabla_{xy}u,
\qquad x\in V.
\]
Since the graph is locally finite, the above sum is finite for every \(x\in V\).

In the linear case \(p=2\), the weighted \(p\)-Laplacian reduces to the weighted
Laplacian
\[
\Delta^\omega u(x)
=
\frac{1}{\mu(x)}
\sum_{y\sim x}
\omega_{xy}\bigl(u(y)-u(x)\bigr),
\qquad x\in V.
\]

We shall use the following discrete integration-by-parts identity. Let
\(\Omega\subset V\) be a nonempty finite subset. If
\(u,\varphi\in X_V(\Omega)\), then
\[
-\sum_{x\in\Omega}\mu(x)\Delta_p^\omega u(x)\varphi(x)
=
\frac12
\sum_{x,y\in V}
\omega_{xy}
|\nabla_{xy}u|^{p-2}\nabla_{xy}u\,\nabla_{xy}\varphi.
\]
In particular, taking \(\varphi=u\), we obtain
\[
-\sum_{x\in\Omega}\mu(x)\Delta_p^\omega u(x)u(x)
=
E_p(u).
\]

\subsection{A localized variational eigenvalue problem}

We shall use the following elementary variational fact several times in the
sequel.

\begin{lemma}\label{lem-variational-localized-eigenvalue}
Consider a connected locally finite weighted graph \((V,\omega,\mu)\), a
nonempty finite subset \(\Omega\subset V\), and a nonempty subset
\(A\subset\Omega\). For \(1<p<\infty\), define
\[
\lambda_A
=
\inf_{\substack{u\in X_V(\Omega)\\
\|u\|_{\ell^p(A,\mu)}>0}}
\frac{E_p(u)}
{\|u\|_{\ell^p(A,\mu)}^p}.
\]
Then the infimum is attained by some nontrivial \(u_A\in X_V(\Omega)\).
Moreover, \(u_A\) satisfies
\[
-\Delta_p^\omega u_A(x)
=
\lambda_A\mathbf{1}_A(x)|u_A(x)|^{p-2}u_A(x),
\qquad x\in\Omega.
\]
\end{lemma}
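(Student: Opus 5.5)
The plan is to exploit finite dimensionality: $X_V(\Omega)$ is naturally identified with $\mathbb R^{\Omega}$, since a function in $X_V(\Omega)$ is determined by its values on the finite set $\Omega$. Both $E_p$ and $\|\cdot\|_{\ell^p(A,\mu)}^p$ are continuous and positively $p$-homogeneous on this space, so the Rayleigh quotient is invariant under scaling. I would therefore restrict to the constraint set
\[
S=\{u\in X_V(\Omega):\ \|u\|_{\ell^p(A,\mu)}=1\}
\]
and minimize $E_p$ on $S$. The set $S$ is nonempty, since it contains a suitable multiple of $\mathbf 1_{\{a\}}$ for any $a\in A$, and it is closed. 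It need not be bounded, because it imposes no control on $\Omega\setminus A$. However, $E_p$ is coercive: as shown in the paper, $E_p^{1/p}$ is a norm on $X_V(\Omega)$, and all norms on a finite-dimensional space are equivalent. Hence the sublevel set $\{u\in S: E_p(u)\le E_p(u_0)\}$ for a fixed $u_0\in S$ is compact. A minimizer $u_A$ exists by continuity, and it is nontrivial because $\|u_A\|_{\ell^p(A,\mu)}=1$. Its value $E_p(u_A)=\lambda_A$ is positive, since $E_p$ is a norm.

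For the Euler--Lagrange equation, I would differentiate in directions $\varphi\in X_V(\Omega)$. Both $t\mapsto |t|^p$ and $t\mapsto|t|^p$ of differences are $C^1$ for $p>1$, with derivative $p|t|^{p-2}t$. Hence
\[
\frac{d}{d\varepsilon}\Big|_{\varepsilon=0}E_p(u_A+\varepsilon\varphi)=\frac p2\sum_{x,y\in V}\omega_{xy}|\nabla_{xy}u_A|^{p-2}\nabla_{xy}u_A\,\nabla_{xy}\varphi ,
\]
and
\[
\frac{d}{d\varepsilon}\Big|_{\varepsilon=0}\|u_A+\varepsilon\varphi\|^p_{\ell^p(A,\mu)}=p\sum_{x\in A}\mu(x)|u_A|^{p-2}u_A\varphi .
\]
Rather than invoking Lagrange multipliers, I would use the unconstrained quotient directly. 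The function
\[
\varepsilon\mapsto \frac{E_p(u_A+\varepsilon\varphi)}{\|u_A+\varepsilon\varphi\|_{\ell^p(A,\mu)}^p}
\]
is defined near $0$, because the denominator equals $1$ at $\varepsilon=0$. It is differentiable there and attains a minimum at $0$, so its derivative vanishes. This gives
\[
\frac12\sum_{x,y}\omega_{xy}|\nabla_{xy}u_A|^{p-2}\nabla_{xy}u_A\nabla_{xy}\varphi=\lambda_A\sum_{x\in A}\mu(x)|u_A(x)|^{p-2}u_A(x)\varphi(x).
\]

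Finally, I apply the integration-by-parts identity to the left-hand side, which turns it into $-\sum_{x\in\Omega}\mu(x)\Delta_p^\omega u_A(x)\varphi(x)$. I then choose $\varphi=\mathbf 1_{\{x_0\}}$ for each $x_0\in\Omega$, and divide by $\mu(x_0)>0$ to obtain the pointwise equation with the factor $\mathbf 1_A$.

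The main obstacle is the compactness step, because the constraint only controls $u$ on $A$. This is handled by the coercivity of $E_p$, which follows from its being a norm on $X_V(\Omega)$ via connectedness. The rest is routine.
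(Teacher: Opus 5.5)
Your proof is correct and follows essentially the same route as the paper. It reduces by homogeneity to the constraint $\|u\|_{\ell^p(A,\mu)}=1$, gets compactness from $E_p^{1/p}$ being a norm on the finite-dimensional space $X_V(\Omega)$, and tests the weak equation with $\mathbf{1}_{\{x_0\}}$; the only differences are that you use a compact sublevel set instead of a minimizing sequence, and you differentiate the Rayleigh quotient directly instead of invoking the Lagrange multiplier rule and then identifying the multiplier as $\lambda_A$ by testing with $u_A$.
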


\begin{proof}
By homogeneity,
\begin{equation}\label{inf2-lambdaA}
\lambda_A
=
\inf
\left\{
E_p(u):\
u\in X_V(\Omega),\
\|u\|_{\ell^p(A,\mu)}=1
\right\}.
\end{equation}
Since \(A\neq\emptyset\), the admissible set is nonempty. Indeed, choose
\(x_0\in A\). Then the function
\[
u=\mu(x_0)^{-1/p}\mathbf{1}_{\{x_0\}}
\]
belongs to \(X_V(\Omega)\), because \(\{x_0\}\subset A\subset\Omega\).
Moreover,
\[
\|u\|_{\ell^p(A,\mu)}^p
=
\sum_{x\in A}\mu(x)|u(x)|^p
=
\mu(x_0)\mu(x_0)^{-1}
=
1.
\]
Thus, \(u\) belongs to the admissible set.

Let \((u_n)\subset X_V(\Omega)\) be a minimizing sequence such that
\begin{equation}\label{bd-un}
\|u_n\|_{\ell^p(A,\mu)}=1
\end{equation}
and
\begin{equation}\label{cv-energy}
E_p(u_n)\to \lambda_A
\qquad \text{as } n\to\infty.
\end{equation}
Since \(\Omega\) is finite, \(X_V(\Omega)\) is finite-dimensional. Moreover,
\(E_p(\cdot)^{1/p}\) is a norm on \(X_V(\Omega)\). Since \((E_p(u_n))\) is
bounded and all norms on \(X_V(\Omega)\) are equivalent, the sequence
\((u_n)\) is bounded in \(\ell^\infty(\Omega)\). Therefore, passing to a
subsequence if necessary, we may assume that
\begin{equation}\label{un-cv}
\|u_n-u_A\|_{\ell^\infty(\Omega)}\to 0
\qquad \text{as } n\to\infty,
\end{equation}
for some \(u_A\in X_V(\Omega)\).

Since \(A\) is finite, it follows from \eqref{bd-un} and \eqref{un-cv} that
\[
\begin{aligned}
1
&=
\lim_{n\to\infty}\sum_{x\in A}\mu(x)|u_n(x)|^p\\
&=
\sum_{x\in A}\mu(x)|u_A(x)|^p,
\end{aligned}
\]
that is,
\begin{equation}\label{norm-uA}
\|u_A\|_{\ell^p(A,\mu)}=1.
\end{equation}

Moreover, since \(\Omega\) is finite and the graph is locally finite, the set
\[
\bigl\{\{x,y\}:x\sim y,\ \{x,y\}\cap\Omega\neq\emptyset\bigr\}
\]
is finite. Hence, using the fact that the functions \(u_n\) and \(u_A\) vanish
in \(V\setminus\Omega\), it follows from \eqref{cv-energy} and \eqref{un-cv}
that
\[
\begin{aligned}
\lambda_A
&=
\lim_{n\to\infty}
\sum_{\substack{\{x,y\}:x\sim y\\ \{x,y\}\cap\Omega\neq\emptyset}}
\omega_{xy}|u_n(y)-u_n(x)|^p \\
&=
\sum_{\substack{\{x,y\}:x\sim y\\ \{x,y\}\cap\Omega\neq\emptyset}}
\omega_{xy}|u_A(y)-u_A(x)|^p,
\end{aligned}
\]
that is,
\begin{equation}\label{id-lambda-E}
\lambda_A=E_p(u_A).
\end{equation}
Thus, by \eqref{norm-uA} and \eqref{id-lambda-E}, the infimum in
\eqref{inf2-lambdaA} is attained by \(u_A\). In particular,
\(u_A\not\equiv 0\).

Next, we derive the Euler--Lagrange equation. Set
\[
G(u)=\|u\|_{\ell^p(A,\mu)}^p
=
\sum_{x\in A}\mu(x)|u(x)|^p.
\]
By \eqref{norm-uA}, we have
\[
G'(u_A)[u_A]
=
p\sum_{x\in A}\mu(x)|u_A(x)|^p
=
p\neq 0.
\]
Therefore, the Lagrange multiplier rule gives the existence of
\(\Lambda_A\in\mathbb{R}\) such that, for every \(\varphi\in X_V(\Omega)\),
\[
\frac12
\sum_{x,y\in V}
\omega_{xy}
|u_A(y)-u_A(x)|^{p-2}
\bigl(u_A(y)-u_A(x)\bigr)
\bigl(\varphi(y)-\varphi(x)\bigr)
=
\Lambda_A
\sum_{x\in A}
\mu(x)|u_A(x)|^{p-2}u_A(x)\varphi(x).
\]
Taking \(\varphi=u_A\) and using \eqref{norm-uA} and \eqref{id-lambda-E}, we
obtain
\[
\Lambda_A=\lambda_A.
\]
Hence, by the discrete integration-by-parts formula,
\[
-\sum_{x\in\Omega}\mu(x)\Delta_p^\omega u_A(x)\varphi(x)
=
\lambda_A
\sum_{x\in A}\mu(x)|u_A(x)|^{p-2}u_A(x)\varphi(x),
\qquad
\varphi\in X_V(\Omega).
\]
Taking \(\varphi=\mathbf{1}_{\{\xi\}}\), with \(\xi\in\Omega\), yields
\[
-\Delta_p^\omega u_A(\xi)
=
\lambda_A\mathbf{1}_A(\xi)|u_A(\xi)|^{p-2}u_A(\xi).
\]
Since \(\xi\in\Omega\) is arbitrary, the desired equation follows.
\end{proof}

\subsection{Point capacity and capacitary radii}

Let \((V,\omega,\mu)\) be a connected locally finite weighted graph. Let
\(\Omega\subset V\) be a nonempty finite subset, and let \(1<p<\infty\).

For \(x\in\Omega\), we define the point \(p\)-capacity of \(x\)
relative to \(\Omega\) by
\[
\operatorname{Cap}_p(x,\Omega)
=
\inf\left\{
E_p(\varphi):\ \varphi\in X_V(\Omega),\ \varphi(x)=1
\right\}.
\]
The admissible class is nonempty, since the function
\[
\varphi=\mathbf{1}_{\{x\}}
\]
belongs to \(X_V(\Omega)\). Hence
\[
\operatorname{Cap}_p(x,\Omega)<\infty.
\]
To prove positivity, we use the fact that \(\Omega\) is finite, so
\(X_V(\Omega)\) is finite-dimensional. Moreover, \(E_p(\cdot)^{1/p}\) is a norm
on \(X_V(\Omega)\). Hence all norms on \(X_V(\Omega)\) are equivalent. In
particular, there exists a constant \(C>0\) such that
\[
\|\varphi\|_{\ell^\infty(\Omega)}
\le
C E_p(\varphi)^{1/p},
\qquad \varphi\in X_V(\Omega).
\]
Therefore, if \(\varphi\in X_V(\Omega)\) and \(\varphi(x)=1\), then
\[
1\le C E_p(\varphi)^{1/p},
\]
and hence
\[
E_p(\varphi)\ge C^{-p}.
\]
Taking the infimum over all such \(\varphi\), we obtain
\[
\operatorname{Cap}_p(x,\Omega)\ge C^{-p}>0.
\]
Consequently,
\[
0<\operatorname{Cap}_p(x,\Omega)<\infty.
\]

We define the capacitary radius of the finite domain \(\Omega\) by
\[
\mathcal{R}_p(\Omega)
=
\max_{x\in\Omega}\operatorname{Cap}_p(x,\Omega)^{-1}.
\]

For \(1<s<\infty\), we define
\[
\mathcal{R}_{p,s}(\Omega)
=
\max_{\substack{u\in X_V(\Omega)\\ u\not\equiv 0}}
\frac{\|u\|_{\ell^{ps'}(\Omega,\mu)}^p}{E_p(u)}.
\]
This maximum is well defined. Indeed, by homogeneity, it is enough to consider
the quotient on the set
\[
\{u\in X_V(\Omega):\ E_p(u)=1\}.
\]
Since \(X_V(\Omega)\) is finite-dimensional and \(E_p(\cdot)^{1/p}\) is a norm
on this space, this set is compact. Moreover, by the equivalence of norms in
finite dimension, the map
\[
X_V(\Omega)\ni u\mapsto \|u\|_{\ell^{ps'}(\Omega,\mu)}^p\in\mathbb{R}
\]
is continuous. Therefore, the maximum is attained.

\section{Statement of the results}\label{sec3}

Throughout this section, \((V,\omega,\mu)\) is a connected locally finite
weighted graph. We consider the elliptic problem
\begin{equation}\label{PP}
\left\{
\begin{array}{ll}
-\Delta_p^\omega u(x)=w(x)|u(x)|^{p-2}u(x), & x\in \Omega,\\[6pt]
u(x)=0, & x\in V\setminus\Omega,
\end{array}
\right.
\end{equation}
where \(1<p<\infty\), \(\Omega\subset V\) is a nonempty finite subset, and
\(w:\Omega\to\mathbb{R}\) is a potential.

By a solution of \eqref{PP}, we mean a function \(u\in X_V(\Omega)\) satisfying
\[
-\Delta_p^\omega u(x)=w(x)|u(x)|^{p-2}u(x),
\qquad x\in\Omega.
\]
A solution is said to be nontrivial if \(u\not\equiv 0\) in \(V\).

\subsection{Intrinsic Lyapunov-type inequalities}

The first main result of this paper is purely capacitary. It provides
intrinsic Lyapunov-type inequalities in terms of the capacitary radii
\(\mathcal{R}_p(\Omega)\) and \(\mathcal{R}_{p,s}(\Omega)\). These inequalities
hold on arbitrary connected locally finite weighted graphs and do not require
any metric, volume-growth, isoperimetric, or bounded-geometry assumptions.

\begin{theorem}\label{thm-cap-lyap}
Assume that \eqref{PP} admits a nontrivial solution
\(u\in X_V(\Omega)\). Then
\begin{equation}\label{Cap-Ly-ineq}
\frac{1}{\mathcal{R}_p(\Omega)}
\le
\|w_+\|_{\ell^1(\Omega,\mu)}.
\end{equation}
Moreover, for every \(1<s<\infty\),
\begin{equation}\label{Cap-Ly-ineq-s}
\frac{1}{\mathcal{R}_{p,s}(\Omega)}
\le
\|w_+\|_{\ell^s(\Omega,\mu)}.
\end{equation}
\end{theorem}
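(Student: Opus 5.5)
Test the equation \eqref{PP} against \(u\) itself. By the discrete integration-by-parts identity with \(\varphi=u\), we get
\[
E_p(u)=\sum_{x\in\Omega}\mu(x)w(x)|u(x)|^p\le\sum_{x\in\Omega}\mu(x)w_+(x)|u(x)|^p .
\]
Since \(u\not\equiv0\) and \(u\in X_V(\Omega)\), the norm property of \(E_p^{1/p}\) on \(X_V(\Omega)\) gives \(E_p(u)>0\). The strategy is then to bound the right-hand side above by \(E_p(u)\) times \(\|w_+\|\) times the relevant capacitary radius, and divide by \(E_p(u)\).

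For \eqref{Cap-Ly-ineq}, estimate the right-hand side by \(\|u\|_{\ell^\infty(\Omega)}^p\|w_+\|_{\ell^1(\Omega,\mu)}\). The key step is a capacitary \(\ell^\infty\) bound. Let \(x_0\in\Omega\) be a point where \(|u|\) attains its maximum \(M=\|u\|_{\ell^\infty(\Omega)}>0\). The function \(\varphi=u/u(x_0)\) belongs to \(X_V(\Omega)\) and satisfies \(\varphi(x_0)=1\), so it is admissible for \(\operatorname{Cap}_p(x_0,\Omega)\). Hence
\[
\operatorname{Cap}_p(x_0,\Omega)\le \frac{E_p(u)}{M^p},
\quad\text{that is,}\quad M^p\le \operatorname{Cap}_p(x_0,\Omega)^{-1}E_p(u)\le \mathcal R_p(\Omega)E_p(u).
\]
Combining the two displays gives \(E_p(u)\le \mathcal R_p(\Omega)E_p(u)\|w_+\|_{\ell^1(\Omega,\mu)}\), and dividing by \(E_p(u)>0\) yields \eqref{Cap-Ly-ineq}.

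For \eqref{Cap-Ly-ineq-s}, apply Hölder's inequality with exponents \(s\) and \(s'\) with respect to the measure \(\mu\) on \(\Omega\):
\[
\sum_{x\in\Omega}\mu(x)w_+(x)|u(x)|^p\le \|w_+\|_{\ell^s(\Omega,\mu)}\Bigl(\sum_{x\in\Omega}\mu(x)|u(x)|^{ps'}\Bigr)^{1/s'}=\|w_+\|_{\ell^s(\Omega,\mu)}\|u\|_{\ell^{ps'}(\Omega,\mu)}^p .
\]
By the definition of \(\mathcal R_{p,s}(\Omega)\) as a maximum over nonzero elements of \(X_V(\Omega)\), we have \(\|u\|_{\ell^{ps'}(\Omega,\mu)}^p\le \mathcal R_{p,s}(\Omega)E_p(u)\). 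Dividing by \(E_p(u)\) again concludes.

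No real obstacle is expected. The only points needing care are the following:
\begin{itemize}
\item \(E_p(u)>0\), which holds because \(E_p^{1/p}\) is a norm on \(X_V(\Omega)\).
\item The normalization \(u/u(x_0)\) is legitimate, because \(u(x_0)\ne0\) with \(x_0\in\Omega\).
\item The capacitary bound \(M^p\le\mathcal R_p(\Omega)E_p(u)\) is the only genuinely new step.
\end{itemize}
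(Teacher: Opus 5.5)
Your proposal is correct and follows essentially the same route as the paper: test the equation against \(u\), normalize by \(u(x_0)\) at a maximum point of \(|u|\) to compare with \(\operatorname{Cap}_p(x_0,\Omega)\), and use H\"older with exponents \(s,s'\) together with the definition of \(\mathcal{R}_{p,s}(\Omega)\). The only cosmetic difference is that you divide by \(E_p(u)>0\), whereas the paper divides by \(\|u\|_{\ell^\infty(\Omega)}^p\) and by \(\|u\|_{\ell^{ps'}(\Omega,\mu)}^p\); both are legitimate.
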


Although Theorem~\ref{thm-cap-lyap} is a direct consequence of the capacitary
definitions, it provides a useful reduction: once the capacitary radii
\(\mathcal{R}_p(\Omega)\) and \(\mathcal{R}_{p,s}(\Omega)\) are estimated, the
corresponding Lyapunov-type inequalities follow immediately. Next, we
illustrate this principle in several geometric situations.

\subsection{An inner-radius Lyapunov-type inequality}

We impose additional geometric assumptions to estimate
\(\mathcal{R}_p(\Omega)\) in terms of the inner radius \(r_\Omega\). More
precisely, we assume that \((V,\omega,\mu)\) satisfies the following conditions.

\begin{itemize}
\item[{\rm(G1)}] There exists a constant \(C_0\ge 1\) such that
\[
C_0^{-1}\le \mu(x)\le C_0,\qquad x\in V,
\]
\[
C_0^{-1}\le \omega_{xy}\le C_0,\qquad x\sim y,
\]
and
\[
\deg(x)\le C_0,\qquad x\in V.
\]

\item[{\rm(G2)}] There exist constants \(D>1\) and \(C_1\ge 1\) such that
\[
C_1^{-1}R^D\le \mu(B(x,R))\le C_1R^D,
\qquad x\in V,\quad R\ge 1.
\]

\item[{\rm(G3)}] There exists a constant \(C_2\ge 1\) such that, for every
\(x_0\in V\), every \(R\ge 1\), and every subset \(A\subset B(x_0,R)\),
\[
\min\{\mu(A),\mu(B(x_0,R)\setminus A)\}
\le C_2 R\,\omega(\partial_{B(x_0,R)}A),
\]
where
\[
\omega(\partial_{B(x_0,R)}A)
=
\sum_{\substack{x\in A,\ y\in B(x_0,R)\setminus A\\ y\sim x}}
\omega_{xy}.
\]
\end{itemize}

\begin{remark}\label{rem-assumptions}
Assumption {\rm(G1)} is a bounded-geometry condition. Assumption {\rm(G2)}
expresses two-sided polynomial volume growth; in particular, the
volume-growth exponent \(D\) is uniquely determined by this condition.
Assumption {\rm(G3)} is a relative isoperimetric inequality in balls.
\end{remark}

The preceding assumptions are satisfied by many graphs with polynomial volume
growth and controlled local geometry. In particular, they hold for the standard
integer lattice \(\mathbb{Z}^D\), endowed with counting measure and unit edge
weights. This example will serve as the basic model for the inner-radius
estimate below.

\begin{example}\label{ex-lattice}
Consider the standard lattice graph with vertex set \(\mathbb{Z}^D\), where
\(D\in\mathbb{N}\) and \(D\ge 2\). Two vertices \(x,y\in\mathbb{Z}^D\) are
adjacent, and we write \(x\sim y\), if and only if
\[
|x-y|_1=1,
\qquad
|x-y|_1=\sum_{i=1}^D |x_i-y_i|.
\]
We endow this graph with the counting measure
\[
\mu(x)=1,\qquad x\in\mathbb{Z}^D,
\]
and with unit edge weights
\[
\omega_{xy}=
\begin{cases}
1, & x\sim y,\\
0, & \text{otherwise}.
\end{cases}
\]
The graph \((\mathbb{Z}^D,\omega,\mu)\) is connected and locally finite, and
each vertex has exactly \(2D\) neighbours. The graph distance is given by
\[
d(x,y)=|x-y|_1,\qquad x,y\in\mathbb{Z}^D.
\]
Thus, for \(x\in\mathbb{Z}^D\) and \(R\ge 0\),
\[
B(x,R)
=
\{y\in\mathbb{Z}^D:\ |x-y|_1\le R\}
=
\{y\in\mathbb{Z}^D:\ |x-y|_1\le \lfloor R\rfloor\},
\]
where \(\lfloor R\rfloor\) denotes the integer part of \(R\).

Assumption {\rm(G1)} is immediate, since \(\mu\equiv 1\), \(\omega_{xy}=1\)
whenever \(x\sim y\), and \(\deg(x)=2D\) for all \(x\in\mathbb{Z}^D\).
Furthermore,
\[
\#B(x,R)\asymp R^D,\qquad x\in\mathbb{Z}^D,\quad R\ge 1,
\]
where the implicit constants depend only on \(D\). Thus, {\rm(G2)} is satisfied
with volume-growth exponent \(D\). Finally, the lattice \(\mathbb{Z}^D\)
satisfies the relative isoperimetric inequality in balls; see, for instance,
\cite[Assumption 1.1 and Remark 1.2]{AndresDeuschelSlowik2016}. The formulation
there is given in terms of the relative internal vertex boundary. On
\(\mathbb{Z}^D\), this formulation is equivalent to the edge-boundary
formulation in {\rm(G3)}, up to a multiplicative constant depending only on
\(D\). Hence, {\rm(G3)} holds.
\end{example}

Under the preceding assumptions, the capacitary radius admits the following
inner-radius estimate.

\begin{theorem}\label{thm-cap-radius-inner}
Assume that \((V,\omega,\mu)\) satisfies {\rm(G1)}--{\rm(G3)}, and let
\(p>D\), where \(D\) is the volume-growth exponent in {\rm(G2)}. Then there
exists a constant \(C=C(p,D,C_0,C_1,C_2)>0\) such that, for every nonempty
finite subset \(\Omega\subset V\),
\begin{equation}\label{Rp-G1-G3}
\mathcal{R}_p(\Omega)\le C r_\Omega^{p-D},
\end{equation}
where \(r_\Omega\) is the inner radius of \(\Omega\) defined by \eqref{r-Omega}.
\end{theorem}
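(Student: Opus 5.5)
We must show that \(\operatorname{Cap}_p(x,\Omega)\ge c\, r_\Omega^{D-p}\) for every \(x\in\Omega\). Equivalently, every \(\varphi\in X_V(\Omega)\) with \(\varphi(x)=1\) should satisfy \(E_p(\varphi)\ge c\,r_\Omega^{D-p}\). The plan is to combine a Morrey-type oscillation estimate on balls, which is available because \(p>D\), with the observation that \(\varphi\) vanishes at some vertex close to \(x\).

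Fix \(x\in\Omega\) and set \(R=d(x,V\setminus\Omega)\). By definition \(1\le R\le r_\Omega\), and there is \(z\in V\setminus\Omega\) with \(d(x,z)=R\), so \(\varphi(z)=0\). Then \(x,z\in B(x,R)\), and it suffices to prove the discrete Morrey inequality
\[
|\varphi(y_1)-\varphi(y_2)|\le C\,R^{1-D/p}\Bigl(\sum_{\substack{\{a,b\}:a\sim b\\ a,b\in B(x,2R)}}\omega_{ab}|\varphi(a)-\varphi(b)|^p\Bigr)^{1/p},
\qquad y_1,y_2\in B(x,R).
\]
Applying it with \(y_1=x\), \(y_2=z\) gives \(1\le C R^{1-D/p}E_p(\varphi)^{1/p}\). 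Hence \(E_p(\varphi)\ge C^{-p}R^{D-p}\ge C^{-p}r_\Omega^{D-p}\), where the last step uses \(D-p<0\) and \(R\le r_\Omega\). Taking the infimum over \(\varphi\) and then the maximum over \(x\in\Omega\) yields \eqref{Rp-G1-G3}.

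To prove the Morrey inequality I would follow the Campanato/telescoping scheme.

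\emph{Step 1: local Poincaré inequality.} From (G3) together with (G1) we get a \(1\)-Poincaré inequality on balls,
\[
\sum_{B(y,r)}\mu|f-f_{B(y,r)}|\le C r\sum_{\text{edges in }B(y,r)}\omega|\nabla f|.
\]
This comes from the standard coarea/level-set argument: apply the relative isoperimetric inequality to the superlevel sets \(\{f>t\}\) and integrate over \(t\), first for \(f\) minus a median and then passing to the mean. Applying this to \(|f-m|^p\) and using Hölder with the bounded degree gives the \(p\)-version
\[
\frac{1}{\mu(B)}\sum_B\mu|f-f_B|^p\le C\,\frac{r^p}{\mu(B)}\sum_{\text{edges in }B}\omega|\nabla f|^p .
\]
If the weak-type (radius-dilation) issue arises, I would use balls \(B(y,2r)\) on the right-hand side; the volume doubling from (G2) absorbs the resulting constants.

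\emph{Step 2: averages at consecutive scales.} For \(r\ge1\), (G2) gives \(\mu(B(y,r))\asymp r^D\), so consecutive dyadic averages satisfy
\[
|f_{B(y,2^{-k-1}R)}-f_{B(y,2^{-k}R)}|\le C(2^{-k}R)^{1-D/p}E_p(f)^{1/p}.
\]

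\emph{Step 3: summation.} Sum over \(k\) down to scale \(1\). At scale \(\le1\), (G1) gives \(|f(y)-f_{B(y,1)}|\le C E_p(f)^{1/p}\) directly, since \(B(y,1)\) consists of \(y\) and its neighbours, with comparable measures and weights. Because \(p>D\) the exponent \(1-D/p\) is positive, so the geometric series is dominated by its top term \(R^{1-D/p}\). Finally, compare \(f_{B(y_1,R)}\) with \(f_{B(y_2,R)}\) through the ball \(B(x,2R)\), which contains both and has comparable volume by (G2).

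The main obstacle is Step 1: deriving a genuine \(p\)-Poincaré inequality on balls from the weak edge-boundary isoperimetric condition (G3). The difficulty is handling the minimum of the two sides (the median trick) and the level-set coarea formula on graphs. I would try the median/level-set argument first, and fall back on applying (G3) in the enlarged ball \(B(x_0,2R)\) if the constants require it.
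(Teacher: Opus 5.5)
Your proposal is correct and follows the paper's proof essentially step by step. The paper likewise derives a $p$-Poincar\'e inequality on balls directly from {\rm(G3)} by a median and layer-cake argument, applied separately to $(u-m)_+^p$ and $(m-u)_+^p$; this is the precise form your ``apply to $|f-m|^p$'' step needs, since $|f-m|^p$ itself need not have median zero. It then proves a dyadic telescoping Morrey estimate, compared through $B(x_0,2R)$, and applies it to $\varphi$ at $x$ and at a nearest point outside $\Omega$. The only cosmetic difference is that the paper uses the radius $r_\Omega$ directly, whereas you use $d(x,V\setminus\Omega)$ and then the monotonicity of $R^{D-p}$.
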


The proof of Theorem~\ref{thm-cap-radius-inner} relies on two auxiliary
estimates. First, using the bounded-geometry condition {\rm(G1)} and the
relative isoperimetric assumption {\rm(G3)}, we derive a local Poincar\'e-type
inequality. Combining this estimate with the polynomial volume-growth condition
{\rm(G2)}, we obtain a discrete Morrey-type inequality for \(p>D\). This Morrey
estimate is the main analytic ingredient: when applied to admissible functions
in the definition of \(\operatorname{Cap}_p(x,\Omega)\), it gives uniform lower
bounds for the point capacities and hence an upper bound for the capacitary
radius.

\begin{remark}
The constant \(C\) in \eqref{Rp-G1-G3} depends only on
\(p,D,C_0,C_1,C_2\). In particular, it is uniform with respect to the finite
set \(\Omega\). The proof of Theorem~\ref{thm-cap-radius-inner} shows that such
a constant can be made explicit in terms of the constants appearing in
{\rm(G1)}--{\rm(G3)}. However, no optimality of this multiplicative constant is
claimed in general. The sharpness issue concerns instead the exponent
\(p-D\) in the power \(r_\Omega^{p-D}\). For specific classes of graphs,
sharper constants may be obtained by exploiting the precise geometry of the
graph.
\end{remark}

\begin{remark}
The restriction \(p>D\) reflects the use of a discrete Morrey-type estimate,
which provides pointwise control from the \(p\)-energy only in the supercritical
range. The borderline and subcritical cases \(p=D\) and \(1<p<D\) generally
require additional information on the geometry of the graph or more explicit
capacity estimates. This will be illustrated below for weighted radial path
graphs.
\end{remark}

An immediate consequence of Theorems~\ref{thm-cap-lyap} and
\ref{thm-cap-radius-inner} is the following inner-radius Lyapunov-type
inequality.

\begin{theorem}\label{th-inner-radius-lyap}
Assume that \((V,\omega,\mu)\) satisfies {\rm(G1)}--{\rm(G3)}, and let
\(p>D\), where \(D\) is the volume-growth exponent in {\rm(G2)}. Then there
exists a constant \(c=c(p,D,C_0,C_1,C_2)>0\) such that, for every nonempty
finite subset \(\Omega\subset V\), if \eqref{PP} admits a nontrivial solution
\(u\in X_V(\Omega)\), then
\begin{equation}\label{LY-IN1}
\|w_+\|_{\ell^1(\Omega,\mu)}
\ge
c r_\Omega^{-(p-D)}.
\end{equation}
\end{theorem}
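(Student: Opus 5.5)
This inequality follows by chaining the two preceding theorems; all the real work sits in Theorem~\ref{thm-cap-radius-inner}, which we take as given. The plan is as follows.

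Fix a nonempty finite subset \(\Omega\subset V\) and suppose \eqref{PP} admits a nontrivial solution \(u\in X_V(\Omega)\).

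\textbf{Step 1.} By Theorem~\ref{thm-cap-lyap}, namely \eqref{Cap-Ly-ineq}, we have
\[
\|w_+\|_{\ell^1(\Omega,\mu)}\ge \mathcal{R}_p(\Omega)^{-1}.
\]
This quantity is well defined and positive, since every point capacity is finite and strictly positive, so \(0<\mathcal{R}_p(\Omega)<\infty\).

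\textbf{Step 2.} Under (G1)--(G3) with \(p>D\), Theorem~\ref{thm-cap-radius-inner} gives
\[
\mathcal{R}_p(\Omega)\le C\,r_\Omega^{p-D},
\]
with \(C=C(p,D,C_0,C_1,C_2)\) independent of \(\Omega\). Here \(1\le r_\Omega<\infty\) because \(\Omega\) is finite, so the right-hand side is finite and positive.

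\textbf{Step 3.} Taking reciprocals in Step 2, which is legitimate since both sides are positive, gives \(\mathcal{R}_p(\Omega)^{-1}\ge C^{-1}r_\Omega^{-(p-D)}\). Combining this with Step 1 yields \eqref{LY-IN1} with \(c=C^{-1}\). This \(c\) depends only on \(p,D,C_0,C_1,C_2\), and in particular is uniform in \(\Omega\).

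There is no genuine obstacle at this stage. The only points to check are the positivity and finiteness of \(\mathcal{R}_p(\Omega)\), which justify the inversion in Step 3, and the uniformity of the constant with respect to \(\Omega\).

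For context, the hard ingredient is the Morrey estimate behind Theorem~\ref{thm-cap-radius-inner}. For an admissible \(\varphi\) with \(\varphi(x)=1\), one compares \(\varphi(x)\) with its values at a point of \(V\setminus\Omega\) at distance \(d(x,V\setminus\Omega)\le r_\Omega\). Chaining Poincaré inequalities over dyadic balls, which come from (G1) and (G3), and using (G2) gives
\[
1\lesssim r_\Omega^{(p-D)/p}E_p(\varphi)^{1/p},
\]
hence \(\operatorname{Cap}_p(x,\Omega)\gtrsim r_\Omega^{-(p-D)}\).
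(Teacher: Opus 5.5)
Your proposal is correct and matches the paper, which presents this theorem as an immediate consequence of combining \eqref{Cap-Ly-ineq} from Theorem~\ref{thm-cap-lyap} with the bound $\mathcal{R}_p(\Omega)\le C r_\Omega^{p-D}$ from Theorem~\ref{thm-cap-radius-inner}. Taking $c=C^{-1}$, which is legitimate because $0<\mathcal{R}_p(\Omega)<\infty$, is exactly the intended argument.
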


\begin{remark}
Inequality \eqref{LY-IN1} can be viewed as a discrete counterpart of the
inner-radius Lyapunov-type inequality obtained by de N\'apoli and Pinasco
\cite{deNapoliPinasco2016} for the continuous \(p\)-Laplacian with
\(w\ge 0\) in the supercritical range \(p>N\). In the present graph setting,
the Euclidean dimension \(N\) is replaced by the volume-growth exponent \(D\),
the Euclidean inner radius by the graph inner radius \(r_\Omega\), and the
Lebesgue \(L^1\)-norm of the weight by the weighted
\(\ell^1(\Omega,\mu)\)-norm.
\end{remark}

Next, we show that the exponent \(p-D\) in \eqref{LY-IN1} cannot be improved
in general. This is proved in a uniform sense by considering a suitable family
of domains in the lattice \(\mathbb{Z}^D\), \(D\ge 2\), endowed with the graph
structure described in Example~\ref{ex-lattice}.

\begin{proposition}[Sharpness of the exponent \(p-D\)]\label{prop-sharp-p>D}
Let \(D\in\mathbb{N}\), \(D\ge 2\), and let \(p>D\). For every
\(\beta<p-D\) and every \(C_*>0\), there exists
\(R_0=R_0(\beta,C_*,p,D)>1\) such that, for every
\(R\in\mathbb{N}\) with \(R\ge R_0\), one can find a nonnegative potential
\(w_R:B(0,R)\to[0,\infty)\) and a nontrivial solution
\(u_R\in X_{\mathbb{Z}^D}(B(0,R))\) of
\[
\left\{
\begin{array}{ll}
-\Delta_p^\omega u_R(x)
=
w_R(x)|u_R(x)|^{p-2}u_R(x), & x\in B(0,R),\\[4pt]
u_R(x)=0, & x\in \mathbb{Z}^D\setminus B(0,R),
\end{array}
\right.
\]
such that
\begin{equation}\label{inv-ineq}
\|w_R\|_{\ell^1(B(0,R),\mu)}
<
\frac{C_*}{r_{B(0,R)}^{\beta}}.
\end{equation}
\end{proposition}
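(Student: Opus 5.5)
The plan is to build the extremal pair from the localized variational problem of Lemma~\ref{lem-variational-localized-eigenvalue}, taking $A$ to be a single point. Then the potential $w_R$ is a multiple of a Dirac mass, and its $\ell^1$-norm equals a point capacity. This reduces \eqref{inv-ineq} to an upper bound on $\operatorname{Cap}_p(0,B(0,R))$ of order $R^{D-p}$, which we get from an explicit test function.

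\emph{Step 1 (construction).} Fix $R\in\mathbb N$ and set $\Omega=B(0,R)$ and $A=\{0\}\subset\Omega$. By Lemma~\ref{lem-variational-localized-eigenvalue}, there is a nontrivial $u_R\in X_{\mathbb Z^D}(\Omega)$ with $|u_R(0)|=1$ (since $\mu(0)=1$) satisfying
\[
-\Delta_p^\omega u_R(x)=\lambda_{\{0\}}\mathbf 1_{\{0\}}(x)|u_R(x)|^{p-2}u_R(x),\qquad x\in\Omega .
\]
We define $w_R=\lambda_{\{0\}}\mathbf 1_{\{0\}}\ge0$. Then $u_R$ solves the stated problem, and
\[
\|w_R\|_{\ell^1(\Omega,\mu)}=\lambda_{\{0\}}=\min\Bigl\{E_p(u):\ u\in X_{\mathbb Z^D}(\Omega),\ |u(0)|=1\Bigr\}=\operatorname{Cap}_p(0,\Omega).
\]

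\emph{Step 2 (capacity bound).} Use the test function $\varphi(x)=\bigl(1-|x|_1/(R+1)\bigr)_+$. It satisfies $\varphi(0)=1$ and vanishes for $|x|_1\ge R+1$, so $\varphi\in X_{\mathbb Z^D}(\Omega)$.
\begin{itemize}
\item For each edge $x\sim y$ we have $\bigl||x|_1-|y|_1\bigr|\le1$, hence $|\varphi(y)-\varphi(x)|\le (R+1)^{-1}$.
\item Only edges with an endpoint in $B(0,R)$ contribute, and there are at most $2D\,\#B(0,R)\le C_D R^D$ of them, with $C_D$ depending only on $D$.
\end{itemize}
Therefore
\[
\operatorname{Cap}_p(0,\Omega)\le E_p(\varphi)\le C_D R^D (R+1)^{-p}\le C_D R^{-(p-D)}.
\]

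\emph{Step 3 (inner radius and conclusion).} First compute the inner radius: $d(0,\mathbb Z^D\setminus B(0,R))=R+1$, and every $x\in B(0,R)$ has $d(x,\mathbb Z^D\setminus B(0,R))=R+1-|x|_1\le R+1$. Hence $r_{B(0,R)}=R+1\le 2R$. It therefore suffices to have
\[
C_D R^{-(p-D)}<C_*(2R)^{-\beta},
\quad\text{that is,}\quad
R^{\,p-D-\beta}>C_D2^{\beta}/C_*.
\]
Since $p-D-\beta>0$, this holds for all $R\ge R_0(\beta,C_*,p,D)$. This holds for either sign of $\beta$: if $\beta<0$ the left-hand side of the original inequality still decays while the right-hand side grows.

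There is no serious obstacle in this argument. The only points needing care are the identification $\lambda_{\{0\}}=\operatorname{Cap}_p(0,\Omega)$, which holds because $\mu(0)=1$ and the normalization is the same, and the counting of edges meeting $B(0,R)$ in Step 2.
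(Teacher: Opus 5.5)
Your proof is correct. It runs on the same machinery as the paper's, namely Lemma~\ref{lem-variational-localized-eigenvalue} plus a tent test function, but you choose the set $A$ differently, and that choice matters.

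\textbf{The paper's route.} The paper takes $A_R=B(0,R^\alpha)$ with $\beta/(p-D)<\alpha<1$. It bounds $\lambda_{A_R}\le CR^{-\alpha p}$ using a tent function supported in $A_R$, and so gets $\|w_R\|_{\ell^1}=\lambda_{A_R}\#A_R\le CR^{-\alpha(p-D)}$. The auxiliary exponent $\alpha$ must be tuned to $\beta$, and the rate obtained is strictly weaker than $R^{-(p-D)}$.

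\textbf{Your route.} You take $A=\{0\}$, so $\|w_R\|_{\ell^1}$ equals the point capacity $\operatorname{Cap}_p(0,B(0,R))$. A single tent function on the whole ball then gives $\|w_R\|_{\ell^1}\le C_DR^{-(p-D)}$. This is simpler and proves more. Combined with Theorem~\ref{thm-cap-radius-inner}, it gives $\mathcal{R}_p(B(0,R))\asymp r_{B(0,R)}^{p-D}$. In other words, the inner-radius Lyapunov inequality is attained up to a multiplicative constant along balls, not just optimal in the exponent. It is also exactly the mechanism the paper uses for trees in Proposition~\ref{prop-tree-sharp}.

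\textbf{What the paper's route buys.} Its potential is spread over a ball with $\sup w_R\le CR^{-\alpha p}$, rather than being a point mass. That could matter if one wanted diffuse or bounded examples, but the proposition as stated does not ask for this.

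\textbf{A small fix in Step 3.} The reduction to $C_DR^{-(p-D)}<C_*(2R)^{-\beta}$ is valid only for $\beta\ge 0$. For $\beta<0$ one has $(2R)^{-\beta}>(R+1)^{-\beta}$, so that implication goes the wrong way. Your closing remark already gives the correct argument in that case: the left side tends to $0$ while $C_*(R+1)^{-\beta}\ge C_*$. You just need to present the two signs of $\beta$ as separate cases.
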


\begin{remark}
Proposition~\ref{prop-sharp-p>D} shows that the exponent \(p-D\) in
Theorem~\ref{th-inner-radius-lyap} is optimal: no estimate of the form
\[
\|w_+\|_{\ell^1(\Omega,\mu)}
\ge C r_\Omega^{-\beta},
\qquad \beta<p-D,
\]
with \(C>0\) independent of \(\Omega\), can hold uniformly for all finite
subsets of \(\mathbb{Z}^D\). The sharpness concerns only the exponent of
\(r_\Omega\), not the value of the constant. We also note that, in the graph
setting, \(r_\Omega\ge 1\) for every nonempty finite subset
\(\Omega\subset V\). In particular, for \(R\in\mathbb{N}\) and
\(B(0,R)\subset\mathbb{Z}^D\), one has
\[
r_{B(0,R)}=R+1.
\]
\end{remark}

The proof of Proposition~\ref{prop-sharp-p>D} is based on a concentration
argument. Using Lemma~\ref{lem-variational-localized-eigenvalue}, we construct
a nontrivial solution whose potential is supported in a smaller ball
\(A_R=B(0,R^\alpha)\). A suitable choice of \(\alpha\in(0,1)\) then gives the
required decay of \(\|w_R\|_{\ell^1(B(0,R),\mu)}\).

\subsection{Radial path graphs}\label{subs-radial}

The capacitary approach also applies beyond the supercritical range. To obtain
explicit capacitary estimates, we consider a weighted radial path graph with
effective dimension \(D>1\). In this model, the relevant scale is given by the
radial outer radius of the domain. The resulting Lyapunov-type inequalities
exhibit three different regimes: a power scale for \(p>D\), a logarithmic scale
for \(p=D\), and, for \(1<p<D\), a scale involving the \(\ell^s\)-norm of the
potential, where \(s>D/p\).

Let \(D>1\). Consider the vertex set
\[
V=\mathbb{N}_0=\{0,1,2,\ldots\}.
\]
Two vertices \(m,n\in V\) are adjacent, and we write \(m\sim n\), if and only if
\[
|m-n|=1.
\]
We define
\[
\mu(n)=(n+1)^{D-1},\qquad n\in\mathbb{N}_0,
\]
and
\[
\omega_{n,n+1}=\omega_{n+1,n}=(n+1)^{D-1},
\qquad n\in\mathbb{N}_0.
\]
All other edge weights are zero.

The resulting weighted graph \((V,\omega,\mu)\) is connected and locally
finite. Indeed, the graph is an infinite path starting from \(0\), and every
vertex has at most two neighbours. The graph distance is given by
\[
d(m,n)=|m-n|,\qquad m,n\in\mathbb{N}_0.
\]
Therefore, for \(R>0\),
\[
B(0,R)=\{0,1,\ldots,\lfloor R\rfloor\}.
\]
Moreover,
\[
\mu(B(0,R))
=
\sum_{n=0}^{\lfloor R\rfloor}(n+1)^{D-1}
\asymp R^D,\qquad R\ge 1.
\]
Thus, \(D\) plays the role of an effective dimension.

For a nonempty finite set \(\Omega\subset V\), we define its radial outer
radius by
\[
\tau_\Omega=\max\{n:\ n\in\Omega\}.
\]
Equivalently, \(\tau_\Omega\) is the smallest nonnegative integer \(k\) such
that
\[
\Omega\subset B(0,k).
\]

\begin{proposition}\label{prop-radial-path-cap-scales}
Let \(D>1\), and let \(\Omega\subset V\) be a nonempty finite subset such that
\(\tau_\Omega\ge 2\). Then the following estimates hold.
\begin{itemize}
\item[{\rm(i)}] If \(p>D\), then there exists a constant \(C=C(p,D)>0\) such that
\begin{equation}\label{est-cap1}
\mathcal{R}_p(\Omega)\le C \tau_\Omega^{p-D}.
\end{equation}

\item[{\rm(ii)}] If \(p=D\), then there exists a constant \(C=C(D)>0\) such that
\begin{equation}\label{est-cap2}
\mathcal{R}_p(\Omega)\le C(\log \tau_\Omega)^{p-1}.
\end{equation}

\item[{\rm(iii)}] If \(1<p<D\) and \(s>D/p\), then there exists a constant
\(C=C(p,D,s)>0\) such that
\begin{equation}\label{est-cap3}
\mathcal{R}_{p,s}(\Omega)\le C \tau_\Omega^{p-\frac{D}{s}}.
\end{equation}
\end{itemize}
\end{proposition}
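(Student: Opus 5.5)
The plan is to use the one-dimensional structure of the path graph: a function in \(X_V(\Omega)\) vanishes at \(\tau_\Omega+1\), so its values can be recovered by telescoping. This gives a pointwise bound in terms of \(E_p(u)\), and the three cases then differ only in how a single weighted sum behaves.

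\emph{Pointwise bound.} Let \(u\in X_V(\Omega)\) and set \(\tau=\tau_\Omega\). Since \(\tau+1\notin\Omega\), we have \(u(\tau+1)=0\). Hence, for every \(0\le n\le\tau\),
\[
u(n)=-\sum_{k=n}^{\tau}\bigl(u(k+1)-u(k)\bigr).
\]
Write \(\omega_k=\omega_{k,k+1}=(k+1)^{D-1}\). Applying H\"older's inequality with exponents \(p\) and \(p'\) to the factors \(\omega_k^{1/p}|u(k+1)-u(k)|\) and \(\omega_k^{-1/p}\) gives
\[
|u(n)|^p\le \Bigl(\sum_{k=n}^{\tau}(k+1)^{-a}\Bigr)^{p-1}E_p(u),\qquad a=\frac{D-1}{p-1}.
\]
Everything reduces to estimating \(S_n=\sum_{k=n}^{\tau}(k+1)^{-a}\).

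\emph{Cases (i) and (ii).}
\begin{itemize}
\item If \(p>D\), then \(a<1\). Hence \(S_n\le S_0\le C(p,D)(\tau+1)^{1-a}\), and since \((1-a)(p-1)=p-D\),
\[
|u(n)|^p\le C(\tau+1)^{p-D}E_p(u)\le C'\tau^{p-D}E_p(u).
\]
The last step uses \(\tau\ge2\). Applying this to any admissible \(\varphi\) with \(\varphi(x)=1\) gives \(\operatorname{Cap}_p(x,\Omega)\ge (C'\tau^{p-D})^{-1}\) for every \(x\in\Omega\). This is \eqref{est-cap1}.
\item If \(p=D\), then \(a=1\). Hence \(S_0\le 1+\log(\tau+1)\le C\log\tau\), using \(\tau\ge 2\) so that \(\log\tau\ge\log 2>0\). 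The same argument gives \eqref{est-cap2}.
\end{itemize}

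\emph{Case (iii).} If \(1<p<D\), then \(a>1\). In this case the tail sum satisfies \(S_n\le C(n+1)^{1-a}\), which gives the decaying pointwise bound
\[
|u(n)|^p\le C(n+1)^{p-D}E_p(u).
\]
Substituting this into the \(\ell^{ps'}\) norm yields
\[
\|u\|_{\ell^{ps'}(\Omega,\mu)}^p=\Bigl(\sum_{n\in\Omega}\mu(n)|u(n)|^{ps'}\Bigr)^{1/s'}
\le C\,E_p(u)\Bigl(\sum_{n=0}^{\tau}(n+1)^{D-1+(p-D)s'}\Bigr)^{1/s'}.
\]
The key observation is that the exponent satisfies \(D-1+(p-D)s'>-1\). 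This is equivalent to \(s'<D/(D-p)\), which holds exactly when \(s>D/p\). Therefore the last sum is at most \(C\tau^{D+(p-D)s'}\), again using \(\tau\ge2\) to absorb \(\tau+1\). Taking the power \(1/s'\) gives \(\tau^{D/s'+p-D}=\tau^{p-D/s}\). Since this holds for every nonzero \(u\in X_V(\Omega)\), it proves \eqref{est-cap3}.

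The only delicate point is this exponent bookkeeping in (iii): the decay of the pointwise bound must be used rather than a uniform bound, and one must check that the condition \(s>D/p\) is exactly what keeps the resulting power sum of polynomial rather than bounded or divergent type. Everything else consists of elementary comparisons of sums with integrals.
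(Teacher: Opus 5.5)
Your proposal is correct and follows essentially the same route as the paper. Both arguments telescope from the vanishing value at \(\tau_\Omega+1\) and apply H\"older with the edge weights \((k+1)^{D-1}\). Both bound the full sum \(\sum_{k=0}^{\tau}(k+1)^{-a}\) in cases (i)--(ii), and use the tail bound \(C(n+1)^{1-a}\) together with the exponent check \(D+(p-D)s'=s'(p-D/s)>0\) in case (iii). The only cosmetic difference is that you derive one pointwise bound for general \(u\) and then specialize it, whereas the paper redoes the telescoping separately for the capacity test functions and for case (iii).
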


\begin{remark}
The constants in Proposition~\ref{prop-radial-path-cap-scales} can be made
explicit by following the proof, since the estimates reduce to elementary
one-dimensional summations. We do not pursue optimal constants here; the
relevant point is the sharp order with respect to \(\tau_\Omega\) in the three
regimes.
\end{remark}

As a consequence of Theorem~\ref{thm-cap-lyap} and
Proposition~\ref{prop-radial-path-cap-scales}, we obtain the following
Lyapunov-type inequalities on radial path graphs.

\begin{theorem}\label{thm-radial-path-lyap}
Let \(D>1\), and let \(\Omega\subset V\) be a nonempty finite subset such that
\(\tau_\Omega\ge 2\). Assume that \eqref{PP} admits a nontrivial solution
\(u\in X_V(\Omega)\). Then the following estimates hold.
\begin{itemize}
\item[{\rm(i)}] If \(p>D\), then there exists a constant \(c=c(p,D)>0\) such
that
\[
\|w_+\|_{\ell^1(\Omega,\mu)}
\ge
c \tau_\Omega^{-(p-D)}.
\]

\item[{\rm(ii)}] If \(p=D\), then there exists a constant \(c=c(D)>0\) such
that
\[
\|w_+\|_{\ell^1(\Omega,\mu)}
\ge
c(\log \tau_\Omega)^{1-p}.
\]

\item[{\rm(iii)}] If \(1<p<D\) and \(s>D/p\), then there exists a constant
\(c=c(p,D,s)>0\) such that
\[
\|w_+\|_{\ell^s(\Omega,\mu)}
\ge
c \tau_\Omega^{-\left(p-\frac{D}{s}\right)}.
\]
\end{itemize}
\end{theorem}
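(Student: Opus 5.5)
The plan is to combine Theorem~\ref{thm-cap-lyap} with Proposition~\ref{prop-radial-path-cap-scales}. The radial path graph \((V,\omega,\mu)\) introduced in Subsection~\ref{subs-radial} is connected and locally finite, as observed there. Hence Theorem~\ref{thm-cap-lyap} applies to any nonempty finite \(\Omega\subset V\) for which \eqref{PP} admits a nontrivial solution. It yields
\[
\|w_+\|_{\ell^1(\Omega,\mu)}\ge \frac{1}{\mathcal{R}_p(\Omega)}
\]
and, for every \(1<s<\infty\),
\[
\|w_+\|_{\ell^s(\Omega,\mu)}\ge \frac{1}{\mathcal{R}_{p,s}(\Omega)}.
\]
Both capacitary radii are finite and positive, by the discussion in Section~\ref{sec2}, so taking reciprocals is legitimate.

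In case (i), where \(p>D\), I insert \eqref{est-cap1}: since \(\mathcal{R}_p(\Omega)\le C\tau_\Omega^{p-D}\), taking reciprocals gives
\[
\|w_+\|_{\ell^1(\Omega,\mu)}\ge C^{-1}\tau_\Omega^{-(p-D)},
\]
so one can take \(c=C^{-1}\), which depends only on \(p\) and \(D\).

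In case (ii), where \(p=D\), the hypothesis \(\tau_\Omega\ge2\) ensures \(\log\tau_\Omega\ge\log2>0\), so the right-hand side of \eqref{est-cap2} is positive and can be inverted. This gives
\[
\|w_+\|_{\ell^1(\Omega,\mu)}\ge C^{-1}(\log\tau_\Omega)^{1-p}
\]
with \(c=C(D)^{-1}\).

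In case (iii), where \(1<p<D\) and \(s>D/p\), the exponent \(s\) is in particular larger than \(1\), so the \(\ell^s\)-part of Theorem~\ref{thm-cap-lyap} is available. Combining it with \eqref{est-cap3} gives
\[
\|w_+\|_{\ell^s(\Omega,\mu)}\ge C^{-1}\tau_\Omega^{-(p-D/s)},
\]
with \(c=C(p,D,s)^{-1}\).

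There is no genuine obstacle; all the analytic work is contained in Proposition~\ref{prop-radial-path-cap-scales}. The only points to check are that \(s>D/p>1\) holds in case (iii), and that \(\log\tau_\Omega>0\) in case (ii), so that inverting the bounds is valid.
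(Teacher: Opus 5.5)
Your proposal is correct and matches the paper's argument: the paper derives this theorem directly by inserting the capacitary bounds of Proposition~\ref{prop-radial-path-cap-scales} into Theorem~\ref{thm-cap-lyap}. Your checks that $s>D/p>1$ in case (iii) and that $\log\tau_\Omega\ge\log 2>0$ in case (ii) are the only details that need verifying.
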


\begin{remark}
The radial path graph illustrates a feature that is naturally captured by the
capacitary approach. Although the underlying graph is one-dimensional as a
combinatorial object, the choice of the vertex measure and edge weights
produces an effective dimension \(D\), which governs the capacitary and
Lyapunov scales. Thus, the relevant dimension is not the combinatorial
dimension of the graph, but the capacitary dimension encoded by
\((\omega,\mu)\). In this model, the estimates are expressed in terms of the
radial outer radius \(\tau_\Omega\). In particular, the graph displays the
three regimes \(p>D\), \(p=D\), and \(1<p<D\): a power scale, a logarithmic
scale, and an \(\ell^s\)-based scale, respectively. This contrasts with the
inner-radius estimate of Theorem~\ref{th-inner-radius-lyap}, which is based on
a supercritical Morrey-type mechanism and therefore applies only when \(p>D\).
\end{remark}

Next, we show that the three radial Lyapunov scales obtained above cannot be
improved with respect to their dependence on \(\tau_\Omega\).

\begin{proposition}[Sharpness of the radial Lyapunov scales]
\label{prop-radial-path-sharp}
Let \(D>1\). The estimates in Theorem~\ref{thm-radial-path-lyap} are sharp in
the following sense.
\begin{itemize}
\item[{\rm(i)}] If \(p>D\), then, for every \(\beta<p-D\) and every
\(C_*>0\), there exists \(R_0=R_0(\beta,C_*,p,D)>2\) such that, for every
\(R\in\mathbb{N}\) with \(R\ge R_0\), one can find a nonnegative potential
\(w_R:B(0,R)\to[0,\infty)\) and a nontrivial solution
\(u_R\in X_V(B(0,R))\) of \eqref{PP}, with \(\Omega=B(0,R)\) and \(w=w_R\),
such that
\begin{equation}\label{sharp-radial-1}
\|w_R\|_{\ell^1(B(0,R),\mu)}
<
C_* R^{-\beta}.
\end{equation}

\item[{\rm(ii)}] If \(p=D\), then, for every \(\beta<p-1\) and every
\(C_*>0\), there exists \(R_0=R_0(\beta,C_*,D)>2\) such that, for every
\(R\in\mathbb{N}\) with \(R\ge R_0\), one can find a nonnegative potential
\(w_R:B(0,R)\to[0,\infty)\) and a nontrivial solution
\(u_R\in X_V(B(0,R))\) of \eqref{PP}, with \(\Omega=B(0,R)\) and \(w=w_R\),
such that
\begin{equation}\label{sharp-radial-2}
\|w_R\|_{\ell^1(B(0,R),\mu)}
<
C_* (\log R)^{-\beta}.
\end{equation}

\item[{\rm(iii)}] If \(1<p<D\) and \(s>D/p\), then, for every
\(\beta<p-\frac{D}{s}\) and every \(C_*>0\), there exists
\(R_0=R_0(\beta,C_*,p,D,s)>2\) such that, for every
\(R\in\mathbb{N}\) with \(R\ge R_0\), one can find a nonnegative potential
\(w_R:B(0,R)\to[0,\infty)\) and a nontrivial solution
\(u_R\in X_V(B(0,R))\) of \eqref{PP}, with \(\Omega=B(0,R)\) and \(w=w_R\),
such that
\begin{equation}\label{sharp-radial-3}
\|w_R\|_{\ell^s(B(0,R),\mu)}
<
C_* R^{-\beta}.
\end{equation}
\end{itemize}
\end{proposition}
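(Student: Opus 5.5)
We follow the concentration strategy described after Proposition~\ref{prop-sharp-p>D}. For each \(R\), choose a subset \(A_R\subset B(0,R)\). Apply Lemma~\ref{lem-variational-localized-eigenvalue} with \(\Omega=B(0,R)\) and \(A=A_R\), and set \(w_R=\lambda_{A_R}\mathbf 1_{A_R}\). The lemma then gives a nontrivial solution \(u_R\), with
\[
\|w_R\|_{\ell^1}=\lambda_{A_R}\mu(A_R),\qquad
\|w_R\|_{\ell^s}=\lambda_{A_R}\mu(A_R)^{1/s}.
\]
So everything reduces to an upper bound on \(\lambda_{A}\). This comes from a test function \(\varphi\in X_V(B(0,R))\) with \(\varphi\equiv1\) on \(A\):
\[
\lambda_A\le \frac{E_p(\varphi)}{\mu(A)}.
\]
The natural choice is \(A=B(0,a)=\{0,\dots,a\}\) and \(\varphi\) equal to the radial \(p\)-capacitary potential of \(B(0,a)\) inside \(B(0,R)\). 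Since \(\varphi\) has to be nonincreasing in \(n\), the one-dimensional minimization is explicit: the minimal energy of a function equal to \(1\) at \(a\) and \(0\) at \(R+1\), with edge weights \((n+1)^{D-1}\), is
\[
\Bigl(\sum_{n=a}^{R}(n+1)^{-(D-1)/(p-1)}\Bigr)^{1-p}.
\]
We take \(\varphi\) to be this harmonic-type minimizer and \(\varphi\equiv 1\) on \(\{0,\dots,a\}\).

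Next we estimate this sum in each regime, using \(\mu(A)\asymp a^D\).

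In case (i), \(p>D\): the exponent \((D-1)/(p-1)<1\), so the sum is \(\asymp R^{(p-D)/(p-1)}\) once \(a\le R/2\). Hence \(E_p(\varphi)\lesssim R^{-(p-D)}\), and this holds even for \(a=0\). Then \(\|w_R\|_{\ell^1}=\lambda_A\mu(A)\le E_p(\varphi)\lesssim R^{-(p-D)}\). This already beats \(C_*R^{-\beta}\) for \(R\) large, because \(\beta<p-D\). So \(A=\{0\}\) or \(A=B(0,R^\alpha)\) both work; we keep \(a=\lfloor R^\alpha\rfloor\) only for uniformity.

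In case (ii), \(p=D\): the exponent equals \(1\), so the sum is \(\asymp\log(R/a)\). With \(a=0\), or with \(a\) fixed, we get \(E_p(\varphi)\lesssim(\log R)^{1-p}\), hence \(\|w_R\|_{\ell^1}\lesssim(\log R)^{-(p-1)}\), which is smaller than \(C_*(\log R)^{-\beta}\) eventually. We must double-check that the sum starting at \(n=a\) with \(a=0\) gives \(\sum_{n=0}^{R}(n+1)^{-1}\asymp\log R\). It does, and \(\tau_\Omega=R\ge2\) ensures \(\log R>0\).

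In case (iii), \(1<p<D\) and \(s>D/p\): the exponent is \(>1\), so the sum is dominated by its first terms and equals \(\asymp a^{(p-D)/(p-1)}\) for \(a\le R/2\). Thus \(E_p(\varphi)\lesssim a^{D-p}\), so \(\lambda_A\lesssim a^{-p}\), and
\[
\|w_R\|_{\ell^s}\lesssim a^{-p+D/s}.
\]
Choosing \(a=\lfloor R/2\rfloor\) gives \(\lesssim R^{-(p-D/s)}\), which is \(<C_*R^{-\beta}\) for \(R\ge R_0\) because \(p-D/s>\beta\) and \(p-D/s>0\).

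The main obstacle is making the capacity computation rigorous and uniform in \(R\). This means writing the explicit minimizer \(\varphi(n)=\sum_{k=n}^{R}(k+1)^{-(D-1)/(p-1)}\big/\sum_{k=a}^{R}(k+1)^{-(D-1)/(p-1)}\) for \(a\le n\le R+1\) (with \(\varphi\equiv 1\) on \([0,a]\)), checking that it lies in \(X_V(B(0,R))\), and verifying the energy identity \(E_p(\varphi)=S^{1-p}\) by a direct telescoping computation. After that, the two-sided comparisons of the sums with integrals are routine. From them one reads off the constants, which depend only on \(p,D,s\), and then fixes \(R_0\) as a function of \(\beta\), \(C_*\), \(p\), \(D\), \(s\).
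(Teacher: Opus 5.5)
Your proposal is correct and follows the paper's strategy: apply Lemma~\ref{lem-variational-localized-eigenvalue} with \(w_R=\lambda_A\mathbf{1}_A\) on \(\Omega_R=B(0,R)\), then bound \(\lambda_A\) from above with a test function. The only difference is the test function: you use the exact condenser potential of \(B(0,a)\) inside \(B(0,R)\), which equals \(1\) on \(A\) and gives \(\lambda_A\mu(A)\le E_p(\varphi)=S^{1-p}\), whereas the paper takes \(A=\Omega_R\) with the linear cutoff \((1-n/(R+1))_+\) in (i) and (iii) and \(A=\{0\}\) with the logarithmic cutoff \((1-\log(n+1)/\log(R+2))_+\) in (ii); both routes give the orders \(R^{-(p-D)}\), \((\log R)^{1-p}\) and \(R^{-(p-D/s)}\).
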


The proof of Proposition~\ref{prop-radial-path-sharp} relies on
Lemma~\ref{lem-variational-localized-eigenvalue} and on suitable
one-dimensional test functions. In the regimes \(p>D\) and \(1<p<D\), the
constructed potentials are supported on the whole ball \(\Omega_R=B(0,R)\),
whereas in the critical case \(p=D\), the potential is concentrated at the
origin. These choices yield, respectively, the power, logarithmic, and
subcritical scales appearing in \eqref{sharp-radial-1}--\eqref{sharp-radial-3}.

\subsection{Regular trees}\label{subs-trees}

Let \(q\ge 3\) be an integer. We denote by \(\mathbb{T}_q=(V,E)\) the
\(q\)-regular tree, that is, the infinite connected graph in which every vertex
has degree \(q\) and which contains no cycles. Two vertices \(x,y\in V\) are
adjacent, and we write \(x\sim y\), if and only if \(\{x,y\}\in E\).

We endow \(\mathbb{T}_q\) with the counting measure
\[
\mu(x)=1,\qquad x\in V,
\]
and unit edge weights
\[
\omega_{xy}=
\begin{cases}
1, & x\sim y,\\
0, & \text{otherwise}.
\end{cases}
\]
The resulting weighted graph \((V,\omega,\mu)\) is connected and locally
finite.

Regular trees provide a contrasting example to the polynomial-growth graphs
considered above. In this case, the capacitary scale is uniform over arbitrary
finite domains and can be computed explicitly.

\begin{proposition}\label{prop-tree-cap}
Let \(\Omega\subset V\) be a nonempty finite subset. Set
\[
c_{p,q}
=
q\left(1-(q-1)^{-\frac{1}{p-1}}\right)^{p-1}.
\]
Then
\begin{equation}\label{cap-RT}
\mathcal{R}_p(\Omega)\le \frac{1}{c_{p,q}}.
\end{equation}
\end{proposition}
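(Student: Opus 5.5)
We need to show that \(\operatorname{Cap}_p(x,\Omega)\ge c_{p,q}\) for every \(x\in\Omega\). The plan is to pass by monotonicity to the whole tree and then compute the capacity of a point there exactly.

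\textbf{Step 1: monotonicity and reduction.} Any \(\varphi\in X_V(\Omega)\) with \(\varphi(x)=1\) is finitely supported on \(V\). Hence \(\operatorname{Cap}_p(x,\Omega)\ge \operatorname{Cap}_p(x)\), where
\[
\operatorname{Cap}_p(x)=\inf\{E_p(\varphi):\ \varphi \text{ finitely supported},\ \varphi(x)=1\}.
\]
It therefore suffices to show \(E_p(\varphi)\ge c_{p,q}\) for every finitely supported \(\varphi\) with \(\varphi(x)=1\). We may assume \(0\le\varphi\le1\), since truncation does not increase energy.

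\textbf{Step 2: radial symmetrization.} Root the tree at \(x\). Let \(S_n\) be the sphere of radius \(n\), so \(\#S_n=q(q-1)^{n-1}\) and the number of edges between \(S_{n-1}\) and \(S_n\) is \(N_n=q(q-1)^{n-1}\). Set
\[
a_n=\frac1{\#S_n}\sum_{y\in S_n}\varphi(y),\qquad a_0=1.
\]
Each vertex of \(S_n\), \(n\ge1\), has exactly one parent in \(S_{n-1}\). By convexity of \(t\mapsto|t|^p\) (Jensen), the energy on the edges between \(S_{n-1}\) and \(S_n\) satisfies
\[
\sum_{y\in S_n}|\varphi(y)-\varphi(\text{parent}(y))|^p\ \ge\ N_n\left|\frac1{N_n}\sum_{y\in S_n}\bigl(\varphi(y)-\varphi(\text{parent}(y))\bigr)\right|^p.
\]
Each parent in \(S_{n-1}\) (for \(n\ge2\)) has exactly \(q-1\) children, so the average of \(\varphi(\text{parent}(y))\) over \(y\in S_n\) equals \(a_{n-1}\). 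Hence
\[
E_p(\varphi)\ge\sum_{n\ge1}N_n|a_{n-1}-a_n|^p,
\]
where \(a_n=0\) for large \(n\).

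\textbf{Step 3: the one-dimensional minimization.} By Hölder's inequality,
\[
1=\sum_{n\ge1}(a_{n-1}-a_n)\le\Bigl(\sum_n N_n|a_{n-1}-a_n|^p\Bigr)^{1/p}\Bigl(\sum_n N_n^{-1/(p-1)}\Bigr)^{(p-1)/p}.
\]
This gives
\[
E_p(\varphi)\ge\Bigl(\sum_{n\ge1}N_n^{-1/(p-1)}\Bigr)^{-(p-1)}.
\]
Compute the series:
\[
\sum_{n\ge1}\bigl(q(q-1)^{n-1}\bigr)^{-1/(p-1)}=q^{-1/(p-1)}\bigl(1-(q-1)^{-1/(p-1)}\bigr)^{-1}.
\]
It converges because \(q\ge3\). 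Raising to the power \(-(p-1)\) yields exactly \(q\bigl(1-(q-1)^{-1/(p-1)}\bigr)^{p-1}=c_{p,q}\).

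\textbf{Conclusion and main care point.} Taking the infimum gives \(\operatorname{Cap}_p(x,\Omega)\ge c_{p,q}\) for each \(x\in\Omega\), and hence \(\mathcal{R}_p(\Omega)\le 1/c_{p,q}\). The step needing care is Step 2, specifically checking that the parent-average equals \(a_{n-1}\) uniformly in \(n\). This uses the regular branching: the root has \(q\) children and every other vertex has \(q-1\) children. With that verified, the rest is the standard series/Hölder capacity computation for weighted paths.
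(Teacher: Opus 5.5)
Your proof is correct and follows essentially the same route as the paper. The paper averages the telescoping inequality along the geodesics from the fixed vertex to a far sphere $S_N$, where each edge between $S_n$ and $S_{n+1}$ lies on exactly $(q-1)^{N-n-1}$ of them, and then applies H\"older to the double sum over layers and edges; you instead telescope the spherical means $a_n$ and use Jensen within each layer before applying H\"older across layers, an equivalent use of the uniform branching that leads to the same series $\sum_{n\ge 0}\bigl(q(q-1)^n\bigr)^{-1/(p-1)}$ and hence the same constant $c_{p,q}$.
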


As a consequence of Theorem~\ref{thm-cap-lyap} and
Proposition~\ref{prop-tree-cap}, we obtain the following uniform
Lyapunov-type inequality on regular trees.

\begin{theorem}\label{thm-tree-lyap}
Let \(\Omega\subset V\) be a nonempty finite subset. Assume that \eqref{PP}
admits a nontrivial solution \(u\in X_V(\Omega)\). Then
\begin{equation}\label{tree-lyap}
\|w_+\|_{\ell^1(\Omega,\mu)}
\ge c_{p,q}.
\end{equation}
\end{theorem}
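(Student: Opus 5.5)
Theorem~\ref{thm-tree-lyap} follows by combining Theorem~\ref{thm-cap-lyap} with Proposition~\ref{prop-tree-cap}, both stated earlier. Let \(u\in X_V(\Omega)\) be a nontrivial solution of \eqref{PP}. By \eqref{Cap-Ly-ineq} we have
\[
\|w_+\|_{\ell^1(\Omega,\mu)}\ge \frac{1}{\mathcal{R}_p(\Omega)}.
\]
We have \(\mathcal{R}_p(\Omega)>0\), since it is a maximum of reciprocals of finite point capacities. So \eqref{cap-RT} can be inverted to give \(1/\mathcal{R}_p(\Omega)\ge c_{p,q}\). Chaining the two inequalities yields \eqref{tree-lyap}.

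For context, we also recall why \eqref{Cap-Ly-ineq} holds, since it carries the real content. Test the equation against \(u\) and use the integration-by-parts identity to get
\[
E_p(u)=\sum_{x\in\Omega}\mu(x)w(x)|u(x)|^p\le \|w_+\|_{\ell^1(\Omega,\mu)}\|u\|_{\ell^\infty(\Omega)}^p .
\]
Next pick \(x_0\in\Omega\) with \(|u(x_0)|=\|u\|_{\ell^\infty(\Omega)}>0\). Then \(u/u(x_0)\) is admissible in the definition of \(\operatorname{Cap}_p(x_0,\Omega)\), which gives
\[
E_p(u)\ge \operatorname{Cap}_p(x_0,\Omega)\|u\|_{\ell^\infty}^p\ge \mathcal{R}_p(\Omega)^{-1}\|u\|_{\ell^\infty}^p .
\]
Dividing by \(\|u\|_{\ell^\infty}^p\) gives \eqref{Cap-Ly-ineq}.

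The only nontrivial ingredient is therefore Proposition~\ref{prop-tree-cap}, namely the uniform bound \(\operatorname{Cap}_p(x,\Omega)\ge c_{p,q}\) for every \(x\in\Omega\) and every finite \(\Omega\). Since it is stated earlier, we assume it here. If one wanted to verify it independently, the natural route is monotonicity of capacity under enlarging \(\Omega\): the capacity of \(x\) relative to any finite \(\Omega\) dominates its capacity relative to the whole tree. That latter capacity can be computed from the radial \(p\)-harmonic profile \(\varphi(y)=(q-1)^{-d(x,y)/(p-1)}\). This profile is \(p\)-harmonic away from \(x\). Its flux at \(x\) is \(q\bigl(1-(q-1)^{-1/(p-1)}\bigr)^{p-1}\), which is exactly \(c_{p,q}\). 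Making this rigorous, through exhaustion by balls and minimality of the \(p\)-harmonic extension, would be the main obstacle in that verification.
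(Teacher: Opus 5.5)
Your proof is correct and matches the paper: Theorem~\ref{thm-tree-lyap} is stated there as an immediate consequence of Theorem~\ref{thm-cap-lyap} combined with Proposition~\ref{prop-tree-cap}. Your recap of \eqref{Cap-Ly-ineq}, testing against \(u\) and normalizing at a maximum point of \(|u|\), is the paper's own argument. Your side remark on the profile \((q-1)^{-d(x,y)/(p-1)}\) is consistent with the paper, but the paper proves Proposition~\ref{prop-tree-cap} more elementarily, by averaging over geodesics from the point to a sphere and applying H\"older's inequality layer by layer.
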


Finally, we show that the constant \(c_{p,q}\) in the uniform Lyapunov-type
inequality \eqref{tree-lyap} cannot be improved. To this end, we fix a vertex
\(o\in V\) and consider the balls \(B(o,R)\), \(R\in\mathbb{N}\).

\begin{proposition}[Sharpness of the uniform constant]
\label{prop-tree-sharp}
The constant \(c_{p,q}\) in Theorem~\ref{thm-tree-lyap} is optimal in the
following sense. For every \(C_*>c_{p,q}\), there exists
\(R_0=R_0(C_*,p,q)>1\) such that, for every \(R\in\mathbb{N}\) with
\(R\ge R_0\), one can find a nonnegative potential
\(w_R:B(o,R)\to[0,\infty)\) and a nontrivial solution
\(u_R\in X_V(B(o,R))\) of \eqref{PP}, with \(\Omega=B(o,R)\) and \(w=w_R\),
such that
\[
\|w_R\|_{\ell^1(B(o,R),\mu)}<C_*.
\]
\end{proposition}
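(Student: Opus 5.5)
Following the strategy the paper uses for the other sharpness statements, I would apply Lemma~\ref{lem-variational-localized-eigenvalue} with \(\Omega=B(o,R)\) and the one-point set \(A=\{o\}\). This gives a nontrivial minimizer \(u_R\in X_V(B(o,R))\) solving
\[
-\Delta_p^\omega u_R=\lambda_{\{o\}}\mathbf{1}_{\{o\}}|u_R|^{p-2}u_R \quad\text{in } B(o,R).
\]
Thus \(w_R=\lambda_{\{o\}}\mathbf{1}_{\{o\}}\ge0\) is admissible, and \(\|w_R\|_{\ell^1(B(o,R),\mu)}=\lambda_{\{o\}}\). Because \(\mu(o)=1\), we have \(\lambda_{\{o\}}=\inf\{E_p(u):u\in X_V(B(o,R)),\ |u(o)|=1\}\), which is exactly \(\operatorname{Cap}_p(o,B(o,R))\). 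It therefore suffices to show
\[
\limsup_{R\to\infty}\operatorname{Cap}_p(o,B(o,R))\le c_{p,q}.
\]
Then for \(C_*>c_{p,q}\) there is \(R_0\) such that the capacity is below \(C_*\) for all \(R\ge R_0\).

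To bound the capacity, I would use a radial test function \(\varphi(x)=f(d(o,x))\) with \(f(0)=1\) and \(f(k)=0\) for \(k\ge R+1\). The sphere of radius \(k\ge1\) has \(q(q-1)^{k-1}\) vertices, and on a tree each vertex at distance \(k+1\) is joined by exactly one edge to the sphere of radius \(k\). Hence
\[
E_p(\varphi)=\sum_{k=0}^{R} q(q-1)^{k}\,|f(k)-f(k+1)|^p .
\]
I would take the optimal increments for this weighted one-dimensional problem, namely \(f(k)-f(k+1)=a\,(q-1)^{-k/(p-1)}\), with \(a\) fixed by requiring the increments to sum to \(1\). Writing \(\theta=(q-1)^{-1/(p-1)}<1\), this gives \(a=(1-\theta)/(1-\theta^{R+1})\). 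Since \((q-1)^k\theta^{kp}=\theta^{k}\), the energy becomes
\[
E_p(\varphi)=q\,a^p\sum_{k=0}^{R}\theta^{k}=\frac{q(1-\theta)^{p-1}}{(1-\theta^{R+1})^{p-1}} .
\]
This tends to \(q(1-\theta)^{p-1}=c_{p,q}\) as \(R\to\infty\), which proves the bound.

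The only step needing care is the edge-counting identity for radial functions on \(\mathbb{T}_q\), which uses the absence of cycles. The remainder is the geometric sum. Finally, I would pick \(R_0\) so that \(q(1-\theta)^{p-1}(1-\theta^{R_0+1})^{1-p}<C_*\), which is possible since \(C_*>c_{p,q}\). Monotonicity in \(R\) then gives the strict inequality for every \(R\ge R_0\).
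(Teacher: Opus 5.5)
Your proof is correct and follows the paper's argument. You apply Lemma~\ref{lem-variational-localized-eigenvalue} with $A=\{o\}$, identify $\|w_R\|_{\ell^1(B(o,R),\mu)}$ with $\operatorname{Cap}_p(o,B(o,R))$, and use the same radial test function with increments proportional to $(q-1)^{-k/(p-1)}$ (the paper's $\psi_R$). The only difference is that the paper also proves the matching lower bound to compute the capacity exactly, whereas you rightly note that the upper bound alone suffices.
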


The proof of Proposition~\ref{prop-tree-sharp} relies on capacitary minimizers
for the point capacity of a fixed vertex in large balls. These minimizers yield
solutions of \eqref{PP} with potentials concentrated at that vertex. The
explicit capacity asymptotics then show that the constant \(c_{p,q}\) is
approached along this family of balls and hence cannot be improved.

\section{Some applications to weighted eigenvalue problems}\label{sec4}

Spectral problems for discrete \(p\)-Laplacians on graphs have been studied
from several viewpoints. Takeuchi~\cite{Takeuchi2003} investigated the
spectrum of the \(p\)-Laplacian on infinite graphs and obtained Cheeger- and
Brooks-type estimates. Amghibech~\cite{Amghibech2003} studied eigenvalues of
the discrete \(p\)-Laplacian on finite graphs, including weighted
formulations. Cheeger-type inequalities and spectral estimates for
\(p\)-Laplacians on weighted graphs were obtained by Keller and
Mugnolo~\cite{KellerMugnolo2016}. Dirichlet \(p\)-Laplacian eigenvalues and
their connections with Cheeger constants were also investigated by Hua and
Wang~\cite{HuaWang2020}. More recently, Lin, Liu, You, and
Zhao~\cite{LinLiuYouZhao2025} obtained lower bounds for the first Dirichlet
eigenvalue on graphs in terms of the inscribed radius and the cardinality of
the interior. Faber--Krahn-type inequalities for the first Dirichlet eigenvalue
of the combinatorial \(p\)-Laplacian on graphs with boundary were studied by
He and Yu~\cite{HeYu2026}.

In this section, we apply the Lyapunov-type inequalities established in the
preceding sections to weighted Dirichlet eigenvalue problems. More precisely,
for a nontrivial weight \(w\ge 0\), we obtain capacitary lower bounds for the
first weighted Dirichlet eigenvalue \(\lambda_{1,p}(\Omega,w)\). These bounds
depend explicitly on \(\ell^s\)-norms of \(w\) and complement Cheeger-type and
Faber--Krahn-type estimates. We then translate them into explicit eigenvalue
bounds in the geometric settings considered above.

Let \((V,\omega,\mu)\) be a connected locally finite weighted graph, and let
\(\Omega\subset V\) be a nonempty finite subset. For a weight
\(w:\Omega\to[0,\infty)\) with \(w\not\equiv0\) and for \(1<p<\infty\), we
define the first weighted Dirichlet eigenvalue by
\[
\lambda_{1,p}(\Omega,w)
=
\inf_{\substack{u\in X_V(\Omega)\\
\sum_{x\in\Omega}\mu(x)w(x)|u(x)|^p>0}}
\frac{E_p(u)}
{\displaystyle\sum_{x\in\Omega}\mu(x)w(x)|u(x)|^p}.
\]
By the same finite-dimensional variational argument as in
Lemma~\ref{lem-variational-localized-eigenvalue}, the infimum is attained by
some nontrivial \(u_1\in X_V(\Omega)\). Moreover, \(u_1\) satisfies
\[
-\Delta_p^\omega u_1(x)
=
\lambda_{1,p}(\Omega,w)w(x)|u_1(x)|^{p-2}u_1(x),
\qquad x\in\Omega.
\]
Thus \(u_1\) is a nontrivial solution of \eqref{PP} in \(\Omega\), with
potential
\[
x\mapsto \lambda_{1,p}(\Omega,w)w(x).
\]

Applying the Lyapunov-type inequalities of Theorem~\ref{thm-cap-lyap}, we
obtain the following capacitary lower bounds for the first weighted Dirichlet
eigenvalue.

\begin{corollary}\label{cor-weighted-eigen-cap}
Under the preceding assumptions, one has
\[
\lambda_{1,p}(\Omega,w)
\ge
\frac{1}{\mathcal{R}_p(\Omega)\|w\|_{\ell^1(\Omega,\mu)}}.
\]
Moreover, for every \(1<s<\infty\),
\[
\lambda_{1,p}(\Omega,w)
\ge
\frac{1}{\mathcal{R}_{p,s}(\Omega)\|w\|_{\ell^s(\Omega,\mu)}}.
\]
\end{corollary}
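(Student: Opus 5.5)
The plan is to apply Theorem~\ref{thm-cap-lyap} to the minimizer \(u_1\) of the weighted Rayleigh quotient. As recalled just before the statement, the finite-dimensional variational argument of Lemma~\ref{lem-variational-localized-eigenvalue} shows that the infimum defining \(\lambda_{1,p}(\Omega,w)\) is attained by some nontrivial \(u_1\in X_V(\Omega)\). This \(u_1\) solves \eqref{PP} with potential \(W(x)=\lambda_{1,p}(\Omega,w)\,w(x)\).

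The only point needing a short check is that this argument goes through with the weighted functional \(G(u)=\sum_{x\in\Omega}\mu(x)w(x)|u(x)|^p\) in place of \(\|u\|^p_{\ell^p(A,\mu)}\). The admissible set is nonempty: since \(w\not\equiv0\), pick \(x_0\) with \(w(x_0)>0\) and use a multiple of \(\mathbf 1_{\{x_0\}}\). The constraint \(G(u)=1\) passes to the limit in finite dimensions, and the Lagrange multiplier again equals \(\lambda_{1,p}(\Omega,w)\).

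Next I will note that \(\lambda_{1,p}(\Omega,w)>0\). Indeed, \(E_p(u_1)>0\) because \(E_p^{1/p}\) is a norm on \(X_V(\Omega)\) and \(u_1\not\equiv0\), while the denominator is finite and positive. Since \(w\ge0\), it follows that \(W\ge0\), so \(W_+=W=\lambda_{1,p}(\Omega,w)\,w\). Consequently
\[
\|W_+\|_{\ell^1(\Omega,\mu)}=\lambda_{1,p}(\Omega,w)\|w\|_{\ell^1(\Omega,\mu)},
\qquad
\|W_+\|_{\ell^s(\Omega,\mu)}=\lambda_{1,p}(\Omega,w)\|w\|_{\ell^s(\Omega,\mu)}.
\]
These norms of \(w\) are positive because \(w\not\equiv0\) and \(\mu>0\).

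Inequality \eqref{Cap-Ly-ineq} of Theorem~\ref{thm-cap-lyap} now gives \(1/\mathcal R_p(\Omega)\le\lambda_{1,p}(\Omega,w)\|w\|_{\ell^1(\Omega,\mu)}\). Dividing by the positive quantity \(\|w\|_{\ell^1(\Omega,\mu)}\) yields the first bound. In the same way, \eqref{Cap-Ly-ineq-s} gives the second bound for every \(1<s<\infty\).

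There is no real obstacle here. The only step requiring care is the existence of the minimizer and its Euler--Lagrange equation for a general nonnegative weight \(w\), which may vanish on part of \(\Omega\). This is handled exactly as in Lemma~\ref{lem-variational-localized-eigenvalue}, with \(\mathbf 1_A\) replaced by \(w\).
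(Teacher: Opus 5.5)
Your proposal is correct and follows the paper's argument: the paper applies Theorem~\ref{thm-cap-lyap} to the minimizer \(u_1\), which solves \eqref{PP} with the nonnegative potential \(\lambda_{1,p}(\Omega,w)\,w\), and then divides by the norm of \(w\). Your additional checks are details the paper leaves implicit: the nonempty admissible class for a weight that may vanish on part of \(\Omega\), the identification of the Lagrange multiplier, and the positivity of \(\lambda_{1,p}(\Omega,w)\) and of \(\|w\|\).
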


Next, we apply Theorem~\ref{th-inner-radius-lyap} to the eigenvalue problem on
graphs satisfying {\rm(G1)}--{\rm(G3)}. This gives the following lower bound
for \(\lambda_{1,p}(\Omega,w)\) in terms of the inner radius.

\begin{corollary}\label{cor-eigen-inner-radius}
Assume in addition that \((V,\omega,\mu)\) satisfies
{\rm(G1)}--{\rm(G3)} and that \(p>D\), where \(D\) is the volume-growth
exponent in {\rm(G2)}. Then there exists a constant
\(c=c(p,D,C_0,C_1,C_2)>0\) such that
\begin{equation}\label{eig-inner-radius}
\lambda_{1,p}(\Omega,w)
\ge
\frac{c}{r_\Omega^{p-D}\|w\|_{\ell^1(\Omega,\mu)}}.
\end{equation}
\end{corollary}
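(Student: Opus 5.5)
The plan is to apply the Lyapunov-type inequality of Theorem~\ref{th-inner-radius-lyap} to the first eigenfunction \(u_1\). As observed just before Corollary~\ref{cor-weighted-eigen-cap}, the infimum defining \(\lambda_{1,p}(\Omega,w)\) is attained by some nontrivial \(u_1\in X_V(\Omega)\). This minimizer solves \eqref{PP} with potential \(W(x)=\lambda_{1,p}(\Omega,w)w(x)\). Since \(u_1\) is a nontrivial solution, Theorem~\ref{th-inner-radius-lyap} applies to this \(W\). (Equivalently, one may go through Corollary~\ref{cor-weighted-eigen-cap} combined with Theorem~\ref{thm-cap-radius-inner}.)

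Next, I compute the relevant norm of the potential. First, \(\lambda_{1,p}(\Omega,w)>0\): the numerator \(E_p(u)\) is positive for every admissible \(u\), because \(E_p^{1/p}\) is a norm on \(X_V(\Omega)\), and the minimum is attained. Together with \(w\ge0\), this gives \(W\ge0\), so \(W_+=W\). Hence
\[
\|W_+\|_{\ell^1(\Omega,\mu)}=\lambda_{1,p}(\Omega,w)\,\|w\|_{\ell^1(\Omega,\mu)}.
\]
Inequality \eqref{LY-IN1} then yields
\[
\lambda_{1,p}(\Omega,w)\|w\|_{\ell^1(\Omega,\mu)}\ge c\,r_\Omega^{-(p-D)},
\]
with \(c=c(p,D,C_0,C_1,C_2)\) as in Theorem~\ref{th-inner-radius-lyap}.

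Finally, since \(w\not\equiv0\) and \(w\ge0\), we have \(\|w\|_{\ell^1(\Omega,\mu)}>0\). Dividing by it gives \eqref{eig-inner-radius}. There is no real obstacle here. The only points to check are the positivity of \(\lambda_{1,p}\), so that \(W_+=W\), and the nonvanishing of \(\|w\|_{\ell^1(\Omega,\mu)}\); both are immediate from the setup.
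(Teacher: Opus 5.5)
Your proposal is correct and follows the route the paper indicates. The minimizer $u_1$ solves \eqref{PP} with the nonnegative potential $\lambda_{1,p}(\Omega,w)w$, so Theorem~\ref{th-inner-radius-lyap} gives $\lambda_{1,p}(\Omega,w)\|w\|_{\ell^1(\Omega,\mu)}\ge c\,r_\Omega^{-(p-D)}$, and dividing by $\|w\|_{\ell^1(\Omega,\mu)}>0$ concludes (for $W_+=W$ you only need $\lambda_{1,p}(\Omega,w)\ge 0$, which is immediate from $E_p\ge 0$).
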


\begin{remark}
Estimate \eqref{eig-inner-radius} can be viewed as a discrete analogue of the
inner-radius eigenvalue bound obtained from the Lyapunov-type inequality of
de N\'apoli and Pinasco~\cite{deNapoliPinasco2016} for the continuous
\(p\)-Laplacian in the supercritical regime \(p>N\).
\end{remark}

For the radial path graph introduced in Subsection~\ref{subs-radial},
Theorem~\ref{thm-radial-path-lyap} yields the following estimates.

\begin{corollary}\label{cor-eigen-radial-path}
Assume that \(\tau_\Omega\ge 2\). Then the following estimates hold.
\begin{itemize}
\item[{\rm(i)}] If \(p>D\), then there exists a constant \(c=c(p,D)>0\) such
that
\[
\lambda_{1,p}(\Omega,w)
\ge
\frac{c}{\tau_\Omega^{p-D}\|w\|_{\ell^1(\Omega,\mu)}}.
\]

\item[{\rm(ii)}] If \(p=D\), then there exists a constant \(c=c(D)>0\) such
that
\[
\lambda_{1,p}(\Omega,w)
\ge
\frac{c}{(\log \tau_\Omega)^{p-1}\|w\|_{\ell^1(\Omega,\mu)}}.
\]

\item[{\rm(iii)}] If \(1<p<D\) and \(s>D/p\), then there exists a constant
\(c=c(p,D,s)>0\) such that
\[
\lambda_{1,p}(\Omega,w)
\ge
\frac{c}{\tau_\Omega^{p-\frac{D}{s}}\|w\|_{\ell^s(\Omega,\mu)}}.
\]
\end{itemize}
\end{corollary}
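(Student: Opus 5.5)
The corollary will follow by applying Theorem~\ref{thm-radial-path-lyap} to the first eigenfunction, exactly as Corollary~\ref{cor-weighted-eigen-cap} follows from Theorem~\ref{thm-cap-lyap}. Set \(\lambda=\lambda_{1,p}(\Omega,w)\). As recalled at the beginning of this section, the infimum defining \(\lambda\) is attained by some nontrivial \(u_1\in X_V(\Omega)\). This minimizer solves \eqref{PP} with potential \(\lambda w\).

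Since \(w\ge0\), \(w\not\equiv0\), and \(E_p(u_1)>0\) (the energy is a norm on \(X_V(\Omega)\)), we have \(\lambda>0\). Hence \((\lambda w)_+=\lambda w\). Positive homogeneity of the norms then gives
\[
\|(\lambda w)_+\|_{\ell^s(\Omega,\mu)}=\lambda\|w\|_{\ell^s(\Omega,\mu)}
\]
for every \(1\le s<\infty\).

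The graph is the radial path graph of Subsection~\ref{subs-radial}, and \(\tau_\Omega\ge2\) by assumption. So Theorem~\ref{thm-radial-path-lyap} applies to the solution \(u_1\) with potential \(\lambda w\). In each regime it yields:
\begin{itemize}
\item[(i)] \(\lambda\|w\|_{\ell^1(\Omega,\mu)}\ge c\,\tau_\Omega^{-(p-D)}\) when \(p>D\);
\item[(ii)] \(\lambda\|w\|_{\ell^1(\Omega,\mu)}\ge c(\log\tau_\Omega)^{1-p}\) when \(p=D\);
\item[(iii)] \(\lambda\|w\|_{\ell^s(\Omega,\mu)}\ge c\,\tau_\Omega^{-(p-D/s)}\) when \(1<p<D\) and \(s>D/p\).
\end{itemize}

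Dividing by the norm of \(w\) gives the three stated bounds, with the same constants \(c\). This division is legitimate: \(w\not\equiv0\) and \(\mu>0\), so the norm is strictly positive. Moreover \(\log\tau_\Omega\ge\log2>0\), so the right-hand side in (ii) is finite and positive.

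The only points needing care, none of them a real obstacle, are the sign of \(\lambda\), the positivity of the norms, and that \(\tau_\Omega\ge2\) is exactly the hypothesis of Theorem~\ref{thm-radial-path-lyap}. Alternatively, one could combine Corollary~\ref{cor-weighted-eigen-cap} directly with Proposition~\ref{prop-radial-path-cap-scales}, which gives the same result.
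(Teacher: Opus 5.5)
Your proposal is correct and takes the same route as the paper. The paper also obtains this corollary by observing that the minimizer \(u_1\) is a nontrivial solution of \eqref{PP} with the nonnegative potential \(\lambda_{1,p}(\Omega,w)w\), applying Theorem~\ref{thm-radial-path-lyap}, and dividing by the positive norm of \(w\); your remarks on the sign of \(\lambda_{1,p}(\Omega,w)\) and the positivity of the norms fill in details the paper leaves implicit.
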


Finally, we consider the regular tree introduced in
Subsection~\ref{subs-trees}. Theorem~\ref{thm-tree-lyap} yields the following
uniform lower bound.

\begin{corollary}\label{cor-eigen-tree}
Let \(\mathbb{T}_q\), \(q\ge 3\), be the \(q\)-regular tree endowed with
counting measure and unit edge weights. Then
\[
\lambda_{1,p}(\Omega,w)
\ge
\frac{c_{p,q}}{\|w\|_{\ell^1(\Omega,\mu)}},
\]
where
\[
c_{p,q}
=
q\left(1-(q-1)^{-\frac{1}{p-1}}\right)^{p-1}.
\]
\end{corollary}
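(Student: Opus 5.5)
The statement is Corollary~\ref{cor-eigen-tree}. I would obtain it by applying Theorem~\ref{thm-tree-lyap} to the first eigenfunction. Set \(\lambda=\lambda_{1,p}(\Omega,w)\). The discussion preceding Corollary~\ref{cor-weighted-eigen-cap} supplies a nontrivial minimizer \(u_1\in X_V(\Omega)\). This \(u_1\) solves \eqref{PP} in \(\Omega\) with potential \(W(x)=\lambda w(x)\). Before using it, I would check that \(\lambda>0\). Indeed, \(E_p(u)>0\) for every nontrivial \(u\in X_V(\Omega)\), since \(E_p^{1/p}\) is a norm there, and the infimum is attained. Hence \(\lambda=E_p(u_1)/\sum\mu w|u_1|^p>0\).

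Because \(w\ge0\) and \(\lambda>0\), the potential \(W\) is nonnegative, so \(W_+=\lambda w\). Theorem~\ref{thm-tree-lyap}, applied on \(\mathbb{T}_q\) with potential \(W\) and solution \(u_1\), then gives
\[
\lambda\,\|w\|_{\ell^1(\Omega,\mu)}=\|W_+\|_{\ell^1(\Omega,\mu)}\ge c_{p,q}.
\]
Since \(w\not\equiv0\), the norm \(\|w\|_{\ell^1(\Omega,\mu)}\) is positive. Dividing by it yields
\[
\lambda_{1,p}(\Omega,w)\ge\frac{c_{p,q}}{\|w\|_{\ell^1(\Omega,\mu)}}.
\]

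An equivalent route is to combine Corollary~\ref{cor-weighted-eigen-cap} with Proposition~\ref{prop-tree-cap}, using \(\mathcal{R}_p(\Omega)\le 1/c_{p,q}\). There is no real obstacle in either argument. The only points needing care are the positivity of \(\lambda\), which makes \(W_+=\lambda w\), and the existence of the eigenfunction, which the paper already provides.
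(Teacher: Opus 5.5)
Your proposal is correct and follows the paper's own route. The paper obtains this corollary directly from Theorem~\ref{thm-tree-lyap}, applied to the first eigenfunction \(u_1\) solving \eqref{PP} with the nonnegative potential \(\lambda_{1,p}(\Omega,w)\,w\), exactly as you do; your alternative via Corollary~\ref{cor-weighted-eigen-cap} and Proposition~\ref{prop-tree-cap} is equivalent, and \(\lambda\ge 0\) alone already gives \(W_+=\lambda w\), so your strict-positivity check is harmless but not needed.
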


\begin{remark}
The sharpness results stated in Section~\ref{sec3} also transfer to the
eigenvalue estimates in a weighted sense. More precisely, if the weight is
allowed to depend on the domain, then the exponents of \(r_\Omega\) and
\(\tau_\Omega\), the logarithmic order in the critical radial case, and the
constant \(c_{p,q}\) in the tree case cannot be improved uniformly. This does
not necessarily imply optimality for a fixed weight, such as \(w\equiv 1\).
\end{remark}

\section{Proofs of the main results}\label{sec5}

In this section, we prove the results stated in Section~\ref{sec3}.
Throughout the proofs, \(C>0\) denotes a generic constant whose value may
change from line to line but remains independent of the relevant scaling
parameters.

\subsection{Lyapunov-type inequalities via capacitary radii}

\begin{proof}[Proof of Theorem~\ref{thm-cap-lyap}]
Let \(u\in X_V(\Omega)\) be a nontrivial solution of \eqref{PP}. Choose
\(x_0\in\Omega\) such that
\[
|u(x_0)|=\|u\|_{\ell^\infty(\Omega)}.
\]
Since \(u\not\equiv 0\), we have \(u(x_0)\neq 0\).

Multiplying \eqref{PP} by \(u\), summing over \(\Omega\), and using the
discrete integration-by-parts identity, we obtain
\begin{equation}\label{cap-proof-energy-id}
E_p(u)=\sum_{x\in\Omega}\mu(x)w(x)|u(x)|^p.
\end{equation}
Since \(w\le w_+\) in \(\Omega\), it follows from
\eqref{cap-proof-energy-id} that
\begin{equation}\label{nsit}
E_p(u)
\le
\sum_{x\in\Omega}\mu(x)w_+(x)|u(x)|^p.
\end{equation}
Hence
\begin{equation}\label{cap-proof-l1-bound}
\frac{E_p(u)}{\|u\|_{\ell^\infty(\Omega)}^p}
\le
\|w_+\|_{\ell^1(\Omega,\mu)}.
\end{equation}

Set
\[
\varphi=\frac{u}{u(x_0)}.
\]
Then, \(\varphi\in X_V(\Omega)\) and \(\varphi(x_0)=1\). Therefore, by the
definition of \(\operatorname{Cap}_p(x_0,\Omega)\),
\begin{equation}\label{cap-proof-point-cap}
\operatorname{Cap}_p(x_0,\Omega)
\le
E_p(\varphi)
=
\frac{E_p(u)}{|u(x_0)|^p}
=
\frac{E_p(u)}{\|u\|_{\ell^\infty(\Omega)}^p}.
\end{equation}
On the other hand, by the definition of \(\mathcal{R}_p(\Omega)\),
\begin{equation}\label{cap-proof-radius}
\frac{1}{\mathcal{R}_p(\Omega)}
=
\min_{x\in\Omega}\operatorname{Cap}_p(x,\Omega)
\le
\operatorname{Cap}_p(x_0,\Omega).
\end{equation}
Combining \eqref{cap-proof-l1-bound}, \eqref{cap-proof-point-cap}, and
\eqref{cap-proof-radius}, we obtain
\[
\frac{1}{\mathcal{R}_p(\Omega)}
\le
\frac{E_p(u)}{\|u\|_{\ell^\infty(\Omega)}^p}
\le
\|w_+\|_{\ell^1(\Omega,\mu)}.
\]
This proves \eqref{Cap-Ly-ineq}.

Fix \(1<s<\infty\). From \eqref{nsit} and H\"older's inequality, we obtain
\[
E_p(u)
\le
\|w_+\|_{\ell^s(\Omega,\mu)}
\|u\|_{\ell^{ps'}(\Omega,\mu)}^p.
\]
Since \(u\not\equiv 0\), we have
\[
\|u\|_{\ell^{ps'}(\Omega,\mu)}>0.
\]
Hence
\begin{equation}\label{div-nom-u}
\frac{E_p(u)}{\|u\|_{\ell^{ps'}(\Omega,\mu)}^p}
\le
\|w_+\|_{\ell^s(\Omega,\mu)}.
\end{equation}
By the definition of \(\mathcal{R}_{p,s}(\Omega)\),
\begin{equation}\label{cap-proof-Rps}
\frac{1}{\mathcal{R}_{p,s}(\Omega)}
\le
\frac{E_p(u)}
{\|u\|_{\ell^{ps'}(\Omega,\mu)}^p}.
\end{equation}
Combining \eqref{div-nom-u} and \eqref{cap-proof-Rps}, we obtain
\[
\frac{1}{\mathcal{R}_{p,s}(\Omega)}
\le
\|w_+\|_{\ell^s(\Omega,\mu)}.
\]
This proves \eqref{Cap-Ly-ineq-s} and completes the proof.
\end{proof}

\subsection{Capacitary estimates under \({\rm(G1)}\)--\({\rm(G3)}\)}

In this subsection, we prove Theorem~\ref{thm-cap-radius-inner} and
Proposition~\ref{prop-sharp-p>D}. First, we establish two auxiliary estimates
needed in the proof of Theorem~\ref{thm-cap-radius-inner}.

\begin{lemma}[Local Poincar\'e inequality]\label{lem-local-poincare}
Assume that \((V,\omega,\mu)\) satisfies {\rm(G1)} and {\rm(G3)}. Then, for
every \(1<p<\infty\), there exists a constant \(C_P=C_P(p,C_0,C_2)>0\)
such that, for every \(x_0\in V\), every \(R\ge1\), and every
\(u:V\to\mathbb R\),
\begin{equation}\label{local-poincare}
\sum_{x\in B(x_0,R)}\mu(x)|u(x)-u_{B(x_0,R)}|^p
\le
C_P R^p
\sum_{\substack{x,y\in B(x_0,R)\\ y\sim x}}
\omega_{xy}|u(y)-u(x)|^p.
\end{equation}
\end{lemma}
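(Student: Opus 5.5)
The plan is to derive the $p$-Poincar\'e inequality on $B=B(x_0,R)$ from the relative isoperimetric inequality (G3) by a coarea/level-set argument. First I will prove the $p=1$ version for the median, and then pass to general $p$ by applying it to a power of $u$.

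\emph{Step 1: reduction to a median.} Let $m$ be a weighted median of $u$ on $B$, so that $\mu(\{u>m\}\cap B)\le \mu(B)/2$ and $\mu(\{u<m\}\cap B)\le\mu(B)/2$. The elementary inequality
\[
\sum_{x\in B}\mu(x)|u-u_B|^p\le 2^{p}\sum_{x\in B}\mu(x)|u-m|^p
\]
(triangle inequality plus Jensen for $|u_B-m|^p$) shows it suffices to bound $\sum_B\mu|u-m|^p$. Write $u-m=v_+-v_-$ with $v_\pm\ge0$; each of $v_+,v_-$ vanishes on a set of measure at least $\mu(B)/2$. I treat $v=v_+$; the other part is symmetric.

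\emph{Step 2: $p=1$ via coarea.} For $t>0$, set $A_t=\{x\in B: v(x)>t\}$. Then $\mu(A_t)\le \mu(B)/2\le\mu(B\setminus A_t)$, so (G3) gives $\mu(A_t)\le C_2R\,\omega(\partial_B A_t)$. Integrating in $t$ and using the discrete coarea formula
\[
\int_0^\infty\omega(\partial_BA_t)\,dt=\sum_{\{x,y\}\subset B,\ x\sim y}\omega_{xy}|v(x)-v(y)|,
\]
I obtain
\[
\sum_B\mu v\le C_2R\sum_{\{x,y\}\subset B,\ x\sim y}\omega_{xy}|v(y)-v(x)|.
\]
This applies to any nonnegative $g$ on $B$ with $\mu(\{g>0\})\le\mu(B)/2$.

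\emph{Step 3: upgrade to $p$.} Apply Step 2 to $g=v^p$, which has the same support. The mean value inequality gives $|a^p-b^p|\le p\,|a-b|\,(a^{p-1}+b^{p-1})$. Hölder's inequality with exponents $p,p'$ then yields
\[
\sum_B\mu v^p\le pC_2R\Big(\sum_{B\text{-edges}}\omega_{xy}|v(y)-v(x)|^p\Big)^{1/p}\Big(\sum_{B\text{-edges}}\omega_{xy}(v(x)^{p-1}+v(y)^{p-1})^{p'}\Big)^{1/p'}.
\]
By (G1), each vertex has at most $C_0$ neighbours, $\omega\le C_0$ and $\mu\ge C_0^{-1}$. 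Hence the last sum is bounded by $C(p,C_0)\sum_B\mu v^p$. Dividing through gives
\[
\sum_B\mu v^p\le C R^p\sum_{B\text{-edges}}\omega_{xy}|v(y)-v(x)|^p.
\]
Since $|v_\pm(y)-v_\pm(x)|\le|u(y)-u(x)|$, adding the bounds for $v_+$ and $v_-$ and using Step 1 gives \eqref{local-poincare}. The sum over ordered pairs in \eqref{local-poincare} only doubles the edge sum, so this costs at most a harmless factor.

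\emph{Main obstacle.} The delicate point is making the coarea step rigorous in the discrete setting and checking that (G3) is applied only to sets $A\subset B$ with the right measure comparison. Near $t=0$ the set $A_t$ may be empty, in which case the inequality is trivial; otherwise the median choice gives $\min\{\mu(A_t),\mu(B\setminus A_t)\}=\mu(A_t)$. It also matters that only edges inside $B$ appear, and these are exactly the ones counted in $\omega(\partial_B A_t)$. The coarea identity itself follows by noting that an edge $\{x,y\}$ crosses $\partial_B A_t$ exactly for $t\in[\min(v(x),v(y)),\max(v(x),v(y)))$.
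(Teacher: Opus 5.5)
Your proof is correct and follows essentially the same route as the paper. You choose a median, apply (G3) to the superlevel sets of $(u-m)_\pm$ through the coarea/layer-cake identity, absorb using $|a^p-b^p|\le p(a^{p-1}+b^{p-1})|a-b|$ together with H\"older and (G1), and pass from the median to the mean by Jensen. The only difference is cosmetic: you apply the $p=1$ coarea inequality to $v^p$, whereas the paper writes the layer-cake formula with weight $pt^{p-1}$, and these are the same computation after the substitution $t\mapsto t^p$.
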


\begin{proof}

Throughout the proof, \(C>0\) denotes a generic constant depending only on \(p,C_0,C_2\).

Let \(x_0\in V\), \(R\ge1\), and \(u:V\to\mathbb R\). Set
\[
B_R=B(x_0,R).
\]
Since the graph is locally finite, \(B_R\) is finite. We may therefore choose
a median \(m\) of \(u\) in \(B_R\), that is,
\[
\mu(\{x\in B_R:\ u(x)>m\})\le \frac{\mu(B_R)}{2},
\qquad
\mu(\{x\in B_R:\ u(x)<m\})\le \frac{\mu(B_R)}{2}.
\]

First, we  prove a weighted \(\ell^p\)-estimate for the positive part \((u-m)_+\). Set
\[
f=(u-m)_+,
\qquad
A_t=\{x\in B_R:\ f(x)>t\},\quad t>0.
\]
Since
\[
A_t\subset \{x\in B_R:\ u(x)>m\},
\]
the median property gives \(\mu(A_t)\le \mu(B_R)/2\). Hence
\[
\min\{\mu(A_t),\mu(B_R\setminus A_t)\}=\mu(A_t).
\]
By {\rm(G3)}, we obtain
\[
\mu(A_t)\le C_2R\,\omega(\partial_{B_R}A_t).
\]
Using the layer-cake formula and the definition of
\(\omega(\partial_{B_R}A_t)\), we get
\[
\begin{aligned}
\sum_{x\in B_R}\mu(x)f(x)^p
&=
p\int_0^\infty t^{p-1}\mu(A_t)\,dt \\
&\le
C_2R
\int_0^\infty p t^{p-1}\omega(\partial_{B_R}A_t)\,dt  \\
&=
C_2R
\sum_{\substack{x,y\in B_R\\ y\sim x}}
\omega_{xy}
\int_0^\infty p t^{p-1}
\mathbf 1_{\{f(y)\le t<f(x)\}}\,dt \\
&\le
C_2R
\sum_{\substack{x,y\in B_R\\ y\sim x}}
\omega_{xy}|f(x)^p-f(y)^p|.
\end{aligned}
\]
Using
\[
|a^p-b^p|
\le
p(a^{p-1}+b^{p-1})|a-b|,
\qquad a,b\ge0,
\]
and H\"older's inequality, we obtain
\[
\sum_{x\in B_R}\mu(x)f(x)^p
\le
C R
\left(
\sum_{\substack{x,y\in B_R\\ y\sim x}}
\omega_{xy}|f(y)-f(x)|^p
\right)^{1/p} 
\left(
\sum_{\substack{x,y\in B_R\\ y\sim x}}
\omega_{xy}
\bigl(f(x)^{p-1}+f(y)^{p-1}\bigr)^{\frac{p}{p-1}}
\right)^{\frac{p-1}{p}}.
\]
Moreover,
\[
\bigl(f(x)^{p-1}+f(y)^{p-1}\bigr)^{\frac{p}{p-1}}
\le
C\bigl(f(x)^p+f(y)^p\bigr).
\]
Using {\rm(G1)}, namely the boundedness of the edge weights and degrees
together with the lower bound on \(\mu\), we obtain
\[
\sum_{\substack{x,y\in B_R\\ y\sim x}}
\omega_{xy}\bigl(f(x)^p+f(y)^p\bigr)
\le
C
\sum_{x\in B_R}\mu(x)f(x)^p.
\]
Therefore,
\[
\sum_{x\in B_R}\mu(x)f(x)^p
\le
C R
\left(
\sum_{\substack{x,y\in B_R\\ y\sim x}}
\omega_{xy}|f(y)-f(x)|^p
\right)^{1/p}
\left(
\sum_{x\in B_R}\mu(x)f(x)^p
\right)^{\frac{p-1}{p}}.
\]
Since \(s\mapsto (s-m)_+\) is \(1\)-Lipschitz, we have
\[
|f(y)-f(x)|\le |u(y)-u(x)|,
\qquad x,y\in B_R.
\]
Consequently,
\begin{equation}\label{chang1}
\sum_{x\in B_R}\mu(x)f(x)^p
\le
C R^p
\sum_{\substack{x,y\in B_R\\ y\sim x}}
\omega_{xy}|u(y)-u(x)|^p.
\end{equation}

Applying the same argument to \(g=(m-u)_+\), we obtain
\begin{equation}\label{chang2}
\sum_{x\in B_R}\mu(x)g(x)^p
\le
C R^p
\sum_{\substack{x,y\in B_R\\ y\sim x}}
\omega_{xy}|u(y)-u(x)|^p.
\end{equation}
Since \(|u-m|^p=f^p+g^p\), it follows from \eqref{chang1} and \eqref{chang2} that
\begin{equation}\label{chang3}
\sum_{x\in B_R}\mu(x)|u(x)-m|^p
\le
C R^p
\sum_{\substack{x,y\in B_R\\ y\sim x}}
\omega_{xy}|u(y)-u(x)|^p.
\end{equation}

It remains to pass from the median to the average. By Jensen's inequality,
\[
|u_{B_R}-m|^p
\le
\frac{1}{\mu(B_R)}
\sum_{x\in B_R}\mu(x)|u(x)-m|^p.
\]
Hence
\begin{equation}\label{chang4}
\begin{aligned}
\sum_{x\in B_R}\mu(x)|u(x)-u_{B_R}|^p
&\le
2^{p-1}\sum_{x\in B_R}\mu(x)|u(x)-m|^p
+
2^{p-1}\mu(B_R)|u_{B_R}-m|^p  \\
&\le
2^p
\sum_{x\in B_R}\mu(x)|u(x)-m|^p.
\end{aligned}
\end{equation}
Combining \eqref{chang3} and \eqref{chang4}, we
obtain \eqref{local-poincare} for some \(C_P=C_P(p,C_0,C_2)>0\).
\end{proof}

\begin{lemma}[Local Morrey-type estimate]\label{lem-local-morrey}
Assume that \((V,\omega,\mu)\) satisfies {\rm(G1)}--{\rm(G3)}. Let
\(p>D\), where \(D\) is the volume-growth exponent in {\rm(G2)}. Then there
exists a constant
\[
C_M=C_M(p,D,C_0,C_1,C_2)>0
\]
such that, for every \(x_0\in V\), every \(R\ge 1\), and every
\(u:V\to\mathbb{R}\),
\begin{equation}\label{Morrey-ineq}
\sup_{x,y\in B(x_0,R)} |u(x)-u(y)|
\le
C_M R^{1-\frac{D}{p}}
\left(
\sum_{\substack{z,z'\in B(x_0,2R)\\ z'\sim z}}
\omega_{zz'}|u(z')-u(z)|^p
\right)^{1/p}.
\end{equation}
\end{lemma}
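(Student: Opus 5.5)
We follow the classical Campanato--Morrey telescoping argument, with Lemma~\ref{lem-local-poincare} as the only analytic input. Write $B_r(z)=B(z,r)$ and
\[
\mathcal{E}=\sum_{\substack{z,z'\in B(x_0,2R)\\ z'\sim z}}\omega_{zz'}|u(z')-u(z)|^p .
\]
Fix $x\in B(x_0,R)$. For $0\le r\le R$ we have $B(x,r)\subset B(x_0,2R)$, so every edge energy over $B(x,r)$ is bounded by $\mathcal{E}$. The plan is to show that $|u(x)-u_{B(x_0,R)}|\le CR^{1-D/p}\mathcal{E}^{1/p}$ for every $x\in B(x_0,R)$. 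The triangle inequality then gives the bound for $|u(x)-u(y)|$.

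\emph{Step 1: comparing averages on nested balls.} Let $A\subset B$ with $B=B(z,r)$, $r\ge1$, and $\mu(A)\ge c\,\mu(B)$. By Jensen's inequality, Lemma~\ref{lem-local-poincare} and {\rm(G2)},
\[
|u_A-u_B|^p\le \frac{1}{\mu(A)}\sum_{B}\mu|u-u_B|^p\le \frac{C_P r^p}{c\,\mu(B)}\,\mathcal{E}\le C r^{p-D}\mathcal{E}.
\]
We apply this with $B=B(x,2^{-k}R)$ and $A=B(x,2^{-k-1}R)$ for $k=0,1,\dots,K$, where $K$ is the last index with $2^{-K-1}R\ge1$. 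The doubling ratio $\mu(A)/\mu(B)\ge c$ follows from the two-sided bound in {\rm(G2)} for radii $\ge1$. We also apply it once with $A=B(x,R)$ and $B=B(x_0,2R)\supset B(x_0,R)$, and once with $A=B(x_0,R)$ and $B=B(x_0,2R)$. This handles the passage from the ball centred at $x$ to the ball centred at $x_0$.

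\emph{Step 2: summing the geometric series.} Summing the bounds from Step~1 gives
\[
|u_{B(x,2^{-K-1}R)}-u_{B(x_0,R)}|\le C\mathcal{E}^{1/p}\sum_{k\ge 0}(2^{-k}R)^{1-D/p}\le C R^{1-D/p}\mathcal{E}^{1/p}.
\]
The series converges precisely because $p>D$, and this is the only place where that hypothesis is used.

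\emph{Step 3: the bottom scale.} It remains to compare $u(x)$ with the average over the smallest ball $B(x,\rho)$, where $1\le\rho<2$. By {\rm(G1)}, this ball has $\mu$-measure bounded by a constant $C(C_0)$. Every point of it is joined to $x$ by a path of length at most $1$ inside $B(x_0,2R)$, and along such an edge $\omega\ge C_0^{-1}$. Hence $|u(y)-u(x)|\le (C_0\mathcal{E})^{1/p}$ for each point $y$ of the ball, so $|u(x)-u_{B(x,\rho)}|\le C\mathcal{E}^{1/p}\le CR^{1-D/p}\mathcal{E}^{1/p}$, using $R\ge1$ and $1-D/p>0$. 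If $R<2$, we skip the dyadic chain entirely and compare $u(x)$ with $u_{B(x_0,R)}$ directly. The same reasoning applies: every point of $B(x_0,R)$ is within distance $2R<4$ of $x$ through vertices of $B(x_0,2R)$, so a bounded number of edges suffices.

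\emph{Expected obstacle.} The main point to check carefully is bookkeeping rather than analysis. We must make sure that each Poincaré application uses only edges with both endpoints in $B(x_0,2R)$ (here $B(x,r)\subset B(x_0,2R)$ because $r\le R$). We must also verify that the doubling constants from {\rm(G2)} are valid only for radii $\ge1$, which is why the dyadic chain stops at scale of order $1$ and the endpoint is handled by the local {\rm(G1)} argument of Step~3.
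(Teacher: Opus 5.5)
Your proposal is correct and is essentially the paper's argument: telescope ball averages along the dyadic radii $2^{-k}R$ using Jensen, the local Poincar\'e inequality and the lower volume bound in (G2), sum a geometric series that converges because $p>D$, and close the chain through the average on $B(x_0,2R)$. The differences are cosmetic. You handle the bottom scale $B(x,1)$ by a direct one-edge bound from (G1), whereas the paper uses $\mu(z)\ge C_0^{-1}$ together with Poincar\'e on $B_N$. You also split off $1\le R<2$, where the paper simply lets the dyadic sum be empty. One small correction: Step 3's use of $R^{1-D/p}\ge 1$ is a second, harmless use of $p>D$, so Step 2 is not the only place that hypothesis enters.
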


\begin{proof}
Throughout the proof, \(C>0\) denotes a generic constant depending only on
\(p,D,C_0,C_1,C_2\).

Let \(x_0\in V\), \(R\ge 1\), and \(u:V\to\mathbb{R}\). Set
\[
\mathcal{E}_R(u)
=
\sum_{\substack{x,y\in B(x_0,2R)\\ y\sim x}}
\omega_{xy}|u(y)-u(x)|^p.
\]

First, we prove that, for every \(z\in B(x_0,R)\),
\begin{equation}\label{morrey-point-estimate}
|u(z)-u_{B(z,R)}|
\le
C R^{1-\frac{D}{p}}\mathcal{E}_R(u)^{1/p}.
\end{equation}
Fix \(z\in B(x_0,R)\). Let
\[
N=\lfloor \log_2 R\rfloor.
\]
Then \(N\ge 0\) and
\[
1\le 2^{-N}R<2.
\]
For \(k=0,\ldots,N\), set
\[
\rho_k=2^{-k}R,
\qquad
B_k=B(z,\rho_k).
\]
If \(N\ge 1\), then
\[
B_{k+1}\subset B_k,\qquad k=0,\ldots,N-1.
\]
Moreover, since \(z\in B(x_0,R)\) and \(\rho_k\le R\), we have
\[
B_k\subset B(x_0,2R),
\qquad k=0,\ldots,N.
\]
For \(k=0,\ldots,N-1\), when this set of indices is nonempty, H\"older's
inequality, the volume lower bound in {\rm(G2)}, and the local Poincar\'e
inequality \eqref{local-poincare} give
\[
\begin{aligned}
|u_{B_{k+1}}-u_{B_k}|
&\le
\mu(B_{k+1})^{-1/p}
\left(
\sum_{x\in B_k}\mu(x)|u(x)-u_{B_k}|^p
\right)^{1/p}  \\
&\le
C\rho_{k+1}^{-\frac{D}{p}}\rho_k
\left(
\sum_{\substack{x,y\in B_k\\ y\sim x}}
\omega_{xy}|u(y)-u(x)|^p
\right)^{1/p}.
\end{aligned}
\]
Since \(\rho_{k+1}=\rho_k/2\) and \(B_k\subset B(x_0,2R)\), it follows that
\[
|u_{B_{k+1}}-u_{B_k}|
\le
C\rho_k^{1-\frac{D}{p}}\mathcal{E}_R(u)^{1/p}.
\]
Hence, because \(p>D\),
\begin{equation}\label{est-diff-u}
\sum_{k=0}^{N-1}|u_{B_{k+1}}-u_{B_k}|
\le
C R^{1-\frac{D}{p}}\mathcal{E}_R(u)^{1/p}.
\end{equation}
When \(N=0\), the sum on the left-hand side is empty, and the estimate is
understood to hold trivially.

Next, we estimate \(|u(z)-u_{B_N}|\). Since \(z\in B_N\), by {\rm(G1)} and
\eqref{local-poincare},
\[
\begin{aligned}
|u(z)-u_{B_N}|
&\le
C
\left(
\sum_{x\in B_N}\mu(x)|u(x)-u_{B_N}|^p
\right)^{1/p}  \\
&\le
C\rho_N
\left(
\sum_{\substack{x,y\in B_N\\ y\sim x}}
\omega_{xy}|u(y)-u(x)|^p
\right)^{1/p}.
\end{aligned}
\]
Using \(B_N\subset B(x_0,2R)\), \(1\le \rho_N<2\), and
\(R^{1-D/p}\ge 1\), we obtain
\begin{equation}\label{est-diff-u2}
|u(z)-u_{B_N}|
\le
C R^{1-\frac{D}{p}}\mathcal{E}_R(u)^{1/p}.
\end{equation}
Since \(B_0=B(z,R)\), the telescopic identity
\[
u(z)-u_{B(z,R)}
=
u(z)-u_{B_N}
+
\sum_{k=0}^{N-1}\bigl(u_{B_{k+1}}-u_{B_k}\bigr)
\]
together with \eqref{est-diff-u} and \eqref{est-diff-u2} gives
\eqref{morrey-point-estimate}.

Let \(x,y\in B(x_0,R)\), and set
\[
B^*=B(x_0,2R).
\]
Then \(B(x,R)\subset B^*\) and \(B(y,R)\subset B^*\). We write
\[
|u(x)-u(y)|
\le
|u(x)-u_{B(x,R)}|
+
|u_{B(x,R)}-u_{B^*}|
+
|u_{B^*}-u_{B(y,R)}|
+
|u_{B(y,R)}-u(y)|.
\]
The first and last terms are bounded by \eqref{morrey-point-estimate}. For the
middle terms, H\"older's inequality, the lower volume bound in {\rm(G2)}, and
\eqref{local-poincare} applied to \(B^*\) give
\[
\begin{aligned}
|u_{B(x,R)}-u_{B^*}|
&\le
\mu(B(x,R))^{-1/p}
\left(
\sum_{\xi\in B^*}\mu(\xi)|u(\xi)-u_{B^*}|^p
\right)^{1/p}  \\
&\le
C R^{-\frac{D}{p}}
\cdot R
\left(
\sum_{\substack{\xi,\eta\in B^*\\ \eta\sim \xi}}
\omega_{\xi\eta}|u(\eta)-u(\xi)|^p
\right)^{1/p}  \\
&\le
C R^{1-\frac{D}{p}}\mathcal{E}_R(u)^{1/p}.
\end{aligned}
\]
The same estimate holds for \(|u_{B(y,R)}-u_{B^*}|\). Therefore,
\[
|u(x)-u(y)|
\le
C R^{1-\frac{D}{p}}\mathcal{E}_R(u)^{1/p}.
\]
Taking the supremum over \(x,y\in B(x_0,R)\), we obtain
\eqref{Morrey-ineq} for some
\[
C_M=C_M(p,D,C_0,C_1,C_2)>0.
\]
\end{proof}

We are now ready to prove Theorem~\ref{thm-cap-radius-inner}.

\begin{proof}[Proof of Theorem~\ref{thm-cap-radius-inner}]
Throughout the proof, \(C>0\) denotes a generic constant depending only on
\(p,D,C_0,C_1,C_2\).

Let \(\Omega\subset V\) be a nonempty finite set. Since \(V\) is infinite,
\(V\setminus\Omega\neq\emptyset\), and by the notation introduced above,
\[
1\le r_\Omega<\infty.
\]
Fix \(x\in\Omega\). Since the graph distance takes values in
\(\mathbb N_0\), the infimum defining \(d(x,V\setminus\Omega)\) is attained.
Hence there exists \(y\in V\setminus\Omega\) such that
\[
d(x,y)=d(x,V\setminus\Omega)\le r_\Omega.
\]

Let \(\varphi\in X_V(\Omega)\) be such that \(\varphi(x)=1\). Since
\(\varphi=0\) in \(V\setminus\Omega\), we have \(\varphi(y)=0\). Hence
\[
1=|\varphi(x)-\varphi(y)|.
\]
Moreover, since \(d(x,y)\le r_\Omega\), both \(x\) and \(y\) belong to
\(B(x,r_\Omega)\). Applying the local Morrey-type estimate
\eqref{Morrey-ineq} with center \(x\) and radius \(r_\Omega\), we get
\[
1
\le
C r_\Omega^{1-\frac{D}{p}}
\left(
\sum_{\substack{z,z'\in B(x,2r_\Omega)\\ z'\sim z}}
\omega_{zz'}|\varphi(z')-\varphi(z)|^p
\right)^{1/p}.
\]
On the other hand, by the definition of \(E_p\),
\[
\sum_{\substack{z,z'\in B(x,2r_\Omega)\\ z'\sim z}}
\omega_{zz'}|\varphi(z')-\varphi(z)|^p
\le
2E_p(\varphi).
\]
Therefore,
\[
1
\le
C r_\Omega^{1-\frac{D}{p}} E_p(\varphi)^{1/p}.
\]
Equivalently, after changing \(C\),
\[
E_p(\varphi)
\ge
C r_\Omega^{-(p-D)}.
\]
Taking the infimum over all \(\varphi\in X_V(\Omega)\) such that
\(\varphi(x)=1\), we obtain
\[
\operatorname{Cap}_p(x,\Omega)
\ge
C r_\Omega^{-(p-D)}.
\]
Since this estimate holds for every \(x\in\Omega\), it follows that
\[
\min_{x\in\Omega}\operatorname{Cap}_p(x,\Omega)
\ge
C r_\Omega^{-(p-D)}.
\]
Using the definition
\[
\mathcal R_p(\Omega)
=
\max_{x\in\Omega}\operatorname{Cap}_p(x,\Omega)^{-1},
\]
we conclude, after changing \(C\) again, that
\[
\mathcal R_p(\Omega)
\le
Cr_\Omega^{p-D}.
\]
This proves \eqref{Rp-G1-G3}.
\end{proof}

\begin{proof}[Proof of Proposition~\ref{prop-sharp-p>D}]
Throughout the proof, \(C>0\) denotes a generic constant independent of \(R\).

Let \(\beta<p-D\) and \(C_*>0\) be fixed. Choose \(\alpha\in(0,1)\) such that
\[
\frac{\beta}{p-D}<\alpha<1.
\]
For \(R\in\mathbb{N}\) sufficiently large, set
\[
\Omega_R=B(0,R),
\qquad
\rho_R=R^\alpha,
\qquad
A_R=B(0,\rho_R).
\]
Since \(0<\alpha<1\), we have \(1<\rho_R<R\) for \(R\) large. Hence
\(A_R\subset\Omega_R\). Moreover, by the volume growth of balls in
\(\mathbb{Z}^D\), there exists a constant \(C>0\), independent of \(R\), such
that
\begin{equation}\label{sharp-volume-AR}
C^{-1}\rho_R^D\le \#A_R\le C\rho_R^D.
\end{equation}

We apply Lemma~\ref{lem-variational-localized-eigenvalue} with
\[
V=\mathbb{Z}^D,\qquad \Omega=\Omega_R,\qquad A=A_R,
\]
endowed with counting measure and unit edge weights. Set
\[
\lambda_R=\lambda_{A_R}.
\]
Then there exists a nontrivial function
\(u_R\in X_{\mathbb{Z}^D}(\Omega_R)\) satisfying
\[
-\Delta_p^\omega u_R(x)
=
\lambda_R\mathbf{1}_{A_R}(x)|u_R(x)|^{p-2}u_R(x),
\qquad x\in\Omega_R.
\]
Thus, \(u_R\) is a nontrivial solution of
\[
\left\{
\begin{array}{ll}
-\Delta_p^\omega u_R(x)
=
w_R(x)|u_R(x)|^{p-2}u_R(x),
& x\in\Omega_R,\\[4pt]
u_R(x)=0,
& x\in\mathbb{Z}^D\setminus\Omega_R,
\end{array}
\right.
\]
with
\[
w_R(x)=\lambda_R\mathbf{1}_{A_R}(x),
\qquad x\in\Omega_R.
\]
Since \(E_p\ge 0\), we have \(\lambda_R\ge 0\), and hence \(w_R\ge 0\).

It remains to estimate \(\|w_R\|_{\ell^1(\Omega_R,\mu)}\). Since
\(\mu\equiv 1\) and \(w_R=\lambda_R\mathbf{1}_{A_R}\), we have
\begin{equation}\label{sharp-wR-norm}
\|w_R\|_{\ell^1(\Omega_R,\mu)}
=
\lambda_R\#A_R.
\end{equation}
To estimate \(\lambda_R\), define
\[
\eta_R(x)
=
\left(1-\frac{|x|_1}{\rho_R}\right)_+,
\qquad x\in\mathbb{Z}^D.
\]
Since \(\rho_R<R\), we have \(\eta_R\in X_{\mathbb{Z}^D}(\Omega_R)\), and
\(\operatorname{supp}\eta_R\subset A_R\). Moreover, \(\eta_R\not\equiv 0\), so
\(\eta_R\) is admissible in the definition of \(\lambda_R\). Hence
\begin{equation}\label{sharp-lambda-test}
\lambda_R
\le
\frac{E_p(\eta_R)}
{\displaystyle\sum_{x\in A_R}|\eta_R(x)|^p}.
\end{equation}
For \(x\sim y\), we have
\[
\bigl||x|_1-|y|_1\bigr|\le 1,
\]
and therefore
\[
|\eta_R(y)-\eta_R(x)|
\le
\frac{1}{\rho_R}.
\]
Since \(\eta_R\) is supported in \(A_R\), only edges touching \(A_R\) contribute
to \(E_p(\eta_R)\). By the bounded degree of \(\mathbb{Z}^D\) and
\eqref{sharp-volume-AR},
\[
\#\bigl\{\{x,y\}:x\sim y,\ \{x,y\}\cap A_R\neq\emptyset\bigr\}
\le C\rho_R^D.
\]
Consequently,
\begin{equation}\label{sharp-test-energy-estimate}
E_p(\eta_R)
\le
C\rho_R^D\rho_R^{-p}
=
C\rho_R^{D-p}.
\end{equation}
On the other hand, \(\eta_R\ge 1/2\) in \(B(0,\rho_R/2)\). Hence, again by the
volume growth of balls in \(\mathbb{Z}^D\),
\begin{equation}\label{sharp-test-denominator-estimate}
\sum_{x\in A_R}|\eta_R(x)|^p
\ge
2^{-p}\#B(0,\rho_R/2)
\ge
C\rho_R^D.
\end{equation}
Combining \eqref{sharp-lambda-test}, \eqref{sharp-test-energy-estimate}, and
\eqref{sharp-test-denominator-estimate}, we obtain
\[
\lambda_R\le C\rho_R^{-p}.
\]
Therefore, by \eqref{sharp-volume-AR} and \eqref{sharp-wR-norm},
\[
\|w_R\|_{\ell^1(\Omega_R,\mu)}
=
\lambda_R\#A_R
\le
C\rho_R^{-p}\rho_R^D
=
C\rho_R^{-(p-D)}
=
C R^{-\alpha(p-D)}.
\]
Since \(\alpha(p-D)>\beta\), we have
\[
R^{-\alpha(p-D)}
=
o\bigl((R+1)^{-\beta}\bigr)
\qquad \text{as } R\to\infty.
\]
Thus, there exists \(R_0=R_0(\beta,C_*,p,D)>1\) such that, for every
\(R\in\mathbb{N}\) with \(R\ge R_0\),
\[
\|w_R\|_{\ell^1(\Omega_R,\mu)}
<
\frac{C_*}{(R+1)^\beta}.
\]
Since \(r_{\Omega_R}=R+1\), we obtain
\[
\|w_R\|_{\ell^1(\Omega_R,\mu)}
<
\frac{C_*}{r_{\Omega_R}^{\beta}}.
\]
This proves \eqref{inv-ineq}.
\end{proof}

\subsection{Capacitary estimates on radial path graphs}

In this subsection, we prove Propositions~\ref{prop-radial-path-cap-scales}
and~\ref{prop-radial-path-sharp}.

\begin{proof}[Proof of Proposition~\ref{prop-radial-path-cap-scales}]
Set
\[
T=\tau_\Omega\ge 2.
\]
First, we prove a basic capacitary estimate that will be used in the cases
\(p>D\) and \(p=D\).

Fix \(x\in\Omega\), and let \(\varphi\in X_V(\Omega)\) be such that
\(\varphi(x)=1\). By the definition of \(T\), we have
\(T+1\in V\setminus\Omega\). Hence
\[
\varphi(T+1)=0.
\]
Therefore,
\[
1
=
|\varphi(x)-\varphi(T+1)|
\le
\sum_{n=x}^{T}|\varphi(n+1)-\varphi(n)|.
\]
Using H\"older's inequality with
\[
c_n=(n+1)^{D-1},\qquad n\ge 0,
\]
we obtain
\[
1
\le
\left(
\sum_{n=x}^{T} c_n |\varphi(n+1)-\varphi(n)|^p
\right)^{1/p}
\left(
\sum_{n=x}^{T} c_n^{-\frac{1}{p-1}}
\right)^{\frac{p-1}{p}}.
\]
Since
\[
E_p(\varphi)
\ge
\sum_{n=x}^{T} c_n |\varphi(n+1)-\varphi(n)|^p,
\]
it follows that
\[
1
\le
E_p(\varphi)^{1/p}
\left(
\sum_{n=x}^{T} (n+1)^{-\frac{D-1}{p-1}}
\right)^{\frac{p-1}{p}}.
\]
Hence
\begin{equation}\label{general-est}
E_p(\varphi)
\ge
\left(
\sum_{n=x}^{T} (n+1)^{-\frac{D-1}{p-1}}
\right)^{1-p}.
\end{equation}

First, we prove {\rm(i)}. Assume that \(p>D\). Then
\[
0<\frac{D-1}{p-1}<1.
\]
Therefore, by the integral comparison and since \(T\ge 2\),
\[
\sum_{n=x}^{T} (n+1)^{-\frac{D-1}{p-1}}
\le
\sum_{n=0}^{T} (n+1)^{-\frac{D-1}{p-1}}
\le
C T^{1-\frac{D-1}{p-1}},
\]
where \(C=C(p,D)>0\). Since
\[
\left(1-\frac{D-1}{p-1}\right)(1-p)=-(p-D),
\]
it follows from \eqref{general-est} that
\[
E_p(\varphi)
\ge
C T^{-(p-D)}.
\]
Taking the infimum over all \(\varphi\in X_V(\Omega)\) such that
\(\varphi(x)=1\), we obtain
\[
\operatorname{Cap}_p(x,\Omega)
\ge
C T^{-(p-D)}.
\]
Since this estimate holds for every \(x\in\Omega\), it follows that
\[
\min_{x\in\Omega}\operatorname{Cap}_p(x,\Omega)
\ge
C T^{-(p-D)}.
\]
Using the definition of \(\mathcal{R}_p(\Omega)\), we conclude that
\[
\mathcal{R}_p(\Omega)
\le
C T^{p-D}
=
C\tau_\Omega^{p-D},
\]
for some \(C=C(p,D)>0\). This proves \eqref{est-cap1}.

Next, we prove {\rm(ii)}. Assume that \(p=D\). Then \eqref{general-est}
becomes
\[
E_p(\varphi)
\ge
\left(
\sum_{n=x}^{T}\frac{1}{n+1}
\right)^{1-p}.
\]
By the integral comparison and since \(T\ge 2\),
\[
\sum_{n=x}^{T}\frac{1}{n+1}
\le
\sum_{n=0}^{T}\frac{1}{n+1}
\le
C\log T,
\]
where \(C=C(D)>0\). Since \(1-p<0\), it follows that
\[
E_p(\varphi)
\ge
C(\log T)^{1-p}.
\]
Taking the infimum over all \(\varphi\in X_V(\Omega)\) such that
\(\varphi(x)=1\), we obtain
\[
\operatorname{Cap}_p(x,\Omega)
\ge
C(\log T)^{1-p}.
\]
Proceeding as above, this yields
\[
\mathcal{R}_p(\Omega)
\le
C(\log T)^{p-1}
=
C(\log \tau_\Omega)^{p-1},
\]
for some \(C=C(D)>0\). This proves \eqref{est-cap2}.

Finally, we prove {\rm(iii)}. Assume that \(1<p<D\) and \(s>D/p\). Let
\(u\in X_V(\Omega)\). For \(n\in\Omega\), since \(n\le T\) and \(u(T+1)=0\),
we have
\[
|u(n)|
=
|u(n)-u(T+1)|
\le
\sum_{k=n}^{T}|u(k+1)-u(k)|.
\]
Using H\"older's inequality with the weights \(c_k=(k+1)^{D-1}\), we obtain
\[
|u(n)|
\le
\left(
\sum_{k=n}^{T}c_k|u(k+1)-u(k)|^p
\right)^{1/p}
\left(
\sum_{k=n}^{T}c_k^{-\frac{1}{p-1}}
\right)^{\frac{p-1}{p}}.
\]
On the other hand,
\[
\sum_{k=n}^{T}c_k|u(k+1)-u(k)|^p
\le
E_p(u).
\]
Consequently,
\[
|u(n)|^p
\le
E_p(u)
\left(
\sum_{k=n}^{T}(k+1)^{-\frac{D-1}{p-1}}
\right)^{p-1}.
\]
Since \(1<p<D\), by the integral comparison,
\[
\sum_{k=n}^{T}(k+1)^{-\frac{D-1}{p-1}}
\le
\sum_{k=n}^{\infty}(k+1)^{-\frac{D-1}{p-1}}
\le
C(n+1)^{1-\frac{D-1}{p-1}},
\]
where \(C=C(p,D)>0\). Hence
\[
|u(n)|^p
\le
C E_p(u)(n+1)^{p-D},
\qquad n\in\Omega.
\]
Using \(\mu(n)=(n+1)^{D-1}\), we obtain
\[
\begin{aligned}
\|u\|_{\ell^{ps'}(\Omega,\mu)}^{ps'}
&=
\sum_{n\in\Omega}\mu(n)|u(n)|^{ps'}  \\
&\le
C E_p(u)^{s'}
\sum_{n=0}^{T}(n+1)^{D-1+(p-D)s'},
\end{aligned}
\]
where \(C=C(p,D,s)>0\). Since \(s>D/p\), we have
\[
D+(p-D)s'
=
s'\left(p-\frac{D}{s}\right)>0.
\]
Thus, by the integral comparison and since \(T\ge 2\),
\[
\sum_{n=0}^{T}(n+1)^{D-1+(p-D)s'}
\le
C T^{D+(p-D)s'}.
\]
Consequently,
\[
\|u\|_{\ell^{ps'}(\Omega,\mu)}^{ps'}
\le
C E_p(u)^{s'}T^{D+(p-D)s'}.
\]
Taking the power \(1/s'\), we obtain
\[
\|u\|_{\ell^{ps'}(\Omega,\mu)}^{p}
\le
C E_p(u)T^{p-\frac{D}{s}}.
\]
Therefore,
\[
\frac{\|u\|_{\ell^{ps'}(\Omega,\mu)}^{p}}{E_p(u)}
\le
C T^{p-\frac{D}{s}},
\qquad u\in X_V(\Omega),\ u\not\equiv 0.
\]
Taking the supremum over all nonzero \(u\in X_V(\Omega)\), we obtain
\[
\mathcal{R}_{p,s}(\Omega)
\le
C T^{p-\frac{D}{s}}
=
C\tau_\Omega^{p-\frac{D}{s}},
\]
for some \(C=C(p,D,s)>0\). This proves \eqref{est-cap3} and completes the
proof.
\end{proof}

\begin{proof}[Proof of Proposition~\ref{prop-radial-path-sharp}]
Throughout the proof, \(C>0\) denotes a generic constant independent of \(R\).

For \(R\in\mathbb{N}\) with \(R\ge 2\), set
\[
\Omega_R=B(0,R)=\{0,\ldots,R\}.
\]
For any \(1<p<\infty\) and any nonempty subset \(A\subset\Omega_R\), we apply
Lemma~\ref{lem-variational-localized-eigenvalue} and write
\[
\lambda_R(A)=\lambda_A.
\]
Then there exists a nontrivial function \(u_{R,A}\in X_V(\Omega_R)\) satisfying
\[
-\Delta_p^\omega u_{R,A}(n)
=
\lambda_R(A)\mathbf{1}_{A}(n)|u_{R,A}(n)|^{p-2}u_{R,A}(n),
\qquad n\in\Omega_R.
\]
Thus, with
\begin{equation}\label{w-R-A}
w_{R,A}(n)=\lambda_R(A)\mathbf{1}_{A}(n),
\qquad n\in\Omega_R,
\end{equation}
we obtain a nonnegative potential \(w_{R,A}\) and a nontrivial solution
\(u_{R,A}\) of \eqref{PP} in \(\Omega_R\).

For notational simplicity, once \(A\) is chosen in each case below, we write
\[
u_R=u_{R,A},
\qquad
w_R=w_{R,A}.
\]

We first record an estimate for \(\lambda_R(\Omega_R)\), valid for every
\(1<p<\infty\). Define
\[
\eta_R(n)=\left(1-\frac{n}{R+1}\right)_+,
\qquad n\in V.
\]
Then \(\eta_R(0)=1\), and
\[
\eta_R(n)=0,
\qquad n\ge R+1.
\]
Hence \(\operatorname{supp}\eta_R\subset\Omega_R\), and so
\(\eta_R\in X_V(\Omega_R)\). In particular, \(\eta_R\not\equiv 0\). Moreover,
\[
|\eta_R(n+1)-\eta_R(n)|=\frac{1}{R+1},
\qquad n=0,\ldots,R.
\]
Hence, by the definition of \(E_p\), the symmetry of the weights, and the
identity
\[
\omega_{n,n+1}=(n+1)^{D-1},
\]
we obtain
\begin{align}\label{Ep-est-casei}
\nonumber
E_p(\eta_R)
&=
\frac12
\sum_{\substack{m,n\in V\\ m\sim n}}
\omega_{mn}|\eta_R(n)-\eta_R(m)|^p  \\
\nonumber
&=
\sum_{n=0}^{R}\omega_{n,n+1}
|\eta_R(n+1)-\eta_R(n)|^p  \\
\nonumber
&=
(R+1)^{-p}\sum_{n=0}^{R}(n+1)^{D-1}  \\
&\le
C R^{D-p}.
\end{align}
On the other hand, \(\eta_R\ge 1/2\) for \(0\le n\le R/2\). Hence
\begin{equation}\label{sum-est-casei}
\sum_{n\in\Omega_R}\mu(n)|\eta_R(n)|^p
\ge
C\sum_{n=0}^{\lfloor R/2\rfloor}(n+1)^{D-1}
\ge
C R^D.
\end{equation}
Since \(\eta_R\) is admissible in the definition of \(\lambda_R(\Omega_R)\), it
follows from \eqref{Ep-est-casei} and \eqref{sum-est-casei} that
\begin{equation}\label{radial-lambda-full}
\lambda_R(\Omega_R)\le C R^{-p}.
\end{equation}

First, we prove {\rm(i)}. Let \(p>D\), \(\beta<p-D\), and \(C_*>0\). We choose
\(A=\Omega_R\). Then, by \eqref{w-R-A},
\[
w_R=\lambda_R(\Omega_R)\mathbf{1}_{\Omega_R}.
\]
Using \eqref{radial-lambda-full}, we obtain
\[
\|w_R\|_{\ell^1(\Omega_R,\mu)}
=
\lambda_R(\Omega_R)\mu(\Omega_R)
\le
C R^{-p}R^D
=
C R^{-(p-D)}.
\]
Since \(\beta<p-D\), we have
\[
R^{-(p-D)}=o(R^{-\beta})
\qquad \text{as } R\to\infty.
\]
Thus, there exists \(R_0=R_0(\beta,C_*,p,D)>2\) such that, for every
\(R\in\mathbb{N}\) with \(R\ge R_0\),
\[
\|w_R\|_{\ell^1(\Omega_R,\mu)}
<
C_*R^{-\beta}.
\]
This proves \eqref{sharp-radial-1}.

Next, we prove {\rm(ii)}. Let \(p=D\), \(\beta<p-1\), and \(C_*>0\). We
choose \(A=\{0\}\) and define
\[
\zeta_R(n)
=
\left(1-\frac{\log(n+1)}{\log(R+2)}\right)_+,
\qquad n\in V.
\]
Then \(\zeta_R(0)=1\). Moreover,
\[
\zeta_R(n)=0,
\qquad n\ge R+1.
\]
Hence \(\operatorname{supp}\zeta_R\subset\Omega_R\), and so
\(\zeta_R\in X_V(\Omega_R)\). In particular, \(\zeta_R\not\equiv 0\).
Moreover, for \(n=0,\ldots,R\),
\[
|\zeta_R(n+1)-\zeta_R(n)|
\le
\frac{C}{(n+1)\log R}.
\]
Since \(p=D\), it follows that
\[
\begin{aligned}
E_p(\zeta_R)
&\le
C(\log R)^{-p}
\sum_{n=0}^{R}(n+1)^{D-1-p}  \\
&=
C(\log R)^{-p}
\sum_{n=0}^{R}\frac{1}{n+1} \\
&\le
C(\log R)^{1-p}.
\end{aligned}
\]
Since
\[
\mu(0)\zeta_R(0)^p=1,
\]
the function \(\zeta_R\) is admissible in the definition of
\(\lambda_R(\{0\})\). Hence
\[
\lambda_R(\{0\})\le C(\log R)^{1-p}.
\]
Then, with \(w_R=\lambda_R(\{0\})\mathbf{1}_{\{0\}}\), we have
\[
\|w_R\|_{\ell^1(\Omega_R,\mu)}
=
\lambda_R(\{0\})\mu(0)
\le
C(\log R)^{1-p}.
\]
Since \(\beta<p-1\), we have
\[
(\log R)^{1-p}=o\bigl((\log R)^{-\beta}\bigr)
\qquad \text{as } R\to\infty.
\]
Thus, there exists \(R_0=R_0(\beta,C_*,D)>2\) such that, for every
\(R\in\mathbb{N}\) with \(R\ge R_0\),
\[
\|w_R\|_{\ell^1(\Omega_R,\mu)}
<
C_*(\log R)^{-\beta}.
\]
This proves \eqref{sharp-radial-2}.

Finally, we prove {\rm(iii)}. Assume that \(1<p<D\) and \(s>D/p\). Let
\(\beta<p-\frac{D}{s}\) and \(C_*>0\). Again, we choose \(A=\Omega_R\). Then
\[
w_R=\lambda_R(\Omega_R)\mathbf{1}_{\Omega_R}.
\]
Using \eqref{radial-lambda-full}, we obtain
\[
\begin{aligned}
\|w_R\|_{\ell^s(\Omega_R,\mu)}
&=
\lambda_R(\Omega_R)\mu(\Omega_R)^{1/s}  \\
&\le
C R^{-p}R^{D/s} \\
&=
C R^{-\left(p-\frac{D}{s}\right)}.
\end{aligned}
\]
Since \(\beta<p-\frac{D}{s}\), we have
\[
R^{-\left(p-\frac{D}{s}\right)}
=
o(R^{-\beta})
\qquad \text{as } R\to\infty.
\]
Thus, there exists \(R_0=R_0(\beta,C_*,p,D,s)>2\) such that, for every
\(R\in\mathbb{N}\) with \(R\ge R_0\),
\[
\|w_R\|_{\ell^s(\Omega_R,\mu)}
<
C_*R^{-\beta}.
\]
This proves \eqref{sharp-radial-3} and completes the proof.
\end{proof}

\subsection{A capacitary estimate on regular trees}

In this subsection, we prove Propositions~\ref{prop-tree-cap} and \ref{prop-tree-sharp}. 

\begin{proof}[Proof of Proposition~\ref{prop-tree-cap}]
Fix \(o\in\Omega\). For \(n\in\mathbb{N}_0\), set
\[
S_n=\{x\in V:\ d(o,x)=n\},
\]
and let \(\mathcal{E}_n\) be the set of oriented edges joining \(S_n\) to
\(S_{n+1}\), namely
\[
\mathcal{E}_n
=
\bigl\{(z,z')\in S_n\times S_{n+1}:\ z\sim z'\bigr\}.
\]
Since \(V\) is the \(q\)-regular tree, the vertex \(o\) has \(q\) neighbors,
and every vertex in \(S_n\), \(n\ge 1\), has exactly one neighbor in
\(S_{n-1}\) and \(q-1\) neighbors in \(S_{n+1}\). Hence
\begin{equation}\label{ppt1}
\#S_0=1,\qquad
\#S_n=q(q-1)^{n-1},\qquad n\ge 1,
\end{equation}
and
\begin{equation}\label{ppt2}
\#\mathcal{E}_n=q(q-1)^n,\qquad n\in\mathbb{N}_0.
\end{equation}

Let \(\varphi\in X_V(\Omega)\) be such that \(\varphi(o)=1\). Since
\(\varphi\) has finite support, there exists \(N\ge 1\) such that
\[
\operatorname{supp}\varphi\subset B(o,N-1).
\]
In particular, \(\varphi=0\) in \(S_N\). For each \(y\in S_N\), since \(V\) is
a tree, there exists a unique path joining \(o\) to \(y\). Since \(d(o,y)=N\),
we write this path as
\[
o=x_0(y)\sim x_1(y)\sim\cdots\sim x_N(y)=y.
\]
Then
\[
x_i(y)\in S_i,\qquad i=0,\ldots,N.
\]
Since \(\varphi(o)=1\) and \(\varphi(y)=0\), we have
\[
1
=
|\varphi(o)-\varphi(y)|
\le
\sum_{n=0}^{N-1}
|\varphi(x_{n+1}(y))-\varphi(x_n(y))|.
\]
Summing this inequality over \(y\in S_N\) and dividing by \(\#S_N\), we obtain
\begin{equation}\label{sum1}
1
\le
\frac{1}{\#S_N}
\sum_{y\in S_N}
\sum_{n=0}^{N-1}
|\varphi(x_{n+1}(y))-\varphi(x_n(y))|.
\end{equation}
Since \(x_n(y)\in S_n\) and \(x_{n+1}(y)\in S_{n+1}\), the oriented edge
\((x_n(y),x_{n+1}(y))\) belongs to \(\mathcal{E}_n\). Moreover, for each
\(0\le n\le N-1\), every edge in \(\mathcal{E}_n\) belongs to exactly
\((q-1)^{N-n-1}\) of the unique paths joining \(o\) to the vertices of
\(S_N\). Therefore, by \eqref{ppt1} and \eqref{ppt2},
\[
\begin{aligned}
\frac{1}{\#S_N}
\sum_{y\in S_N}
\sum_{n=0}^{N-1}
|\varphi(x_{n+1}(y))-\varphi(x_n(y))|
&=
\sum_{n=0}^{N-1}
\frac{(q-1)^{N-n-1}}{\#S_N}
\sum_{(z,z')\in\mathcal{E}_n}
|\varphi(z')-\varphi(z)|  \\
&=
\sum_{n=0}^{N-1}
\frac{1}{\#\mathcal{E}_n}
\sum_{(z,z')\in\mathcal{E}_n}
|\varphi(z')-\varphi(z)|.
\end{aligned}
\]
Hence, by \eqref{sum1},
\[
1
\le
\sum_{n=0}^{N-1}
\frac{1}{\#\mathcal{E}_n}
\sum_{(z,z')\in\mathcal{E}_n}
|\varphi(z')-\varphi(z)|.
\]
By H\"older's inequality,
\[
\begin{aligned}
1
&\le
\left(
\sum_{n=0}^{N-1}
\sum_{(z,z')\in\mathcal{E}_n}
|\varphi(z')-\varphi(z)|^p
\right)^{1/p}
\left(
\sum_{n=0}^{N-1}(\#\mathcal{E}_n)^{-\frac{1}{p-1}}
\right)^{\frac{p-1}{p}}  \\
&\le
E_p(\varphi)^{1/p}
\left(
\sum_{n=0}^{N-1}(\#\mathcal{E}_n)^{-\frac{1}{p-1}}
\right)^{\frac{p-1}{p}}.
\end{aligned}
\]
Therefore,
\[
E_p(\varphi)
\ge
\left(
\sum_{n=0}^{N-1}(\#\mathcal{E}_n)^{-\frac{1}{p-1}}
\right)^{1-p}.
\]
On the other hand, by \eqref{ppt2},
\[
\begin{aligned}
\sum_{n=0}^{N-1}(\#\mathcal{E}_n)^{-\frac{1}{p-1}}
&=
q^{-\frac{1}{p-1}}
\sum_{n=0}^{N-1}(q-1)^{-\frac{n}{p-1}}  \\
&\le
q^{-\frac{1}{p-1}}
\sum_{n=0}^{\infty}(q-1)^{-\frac{n}{p-1}}  \\
&=
\frac{q^{-\frac{1}{p-1}}}
{1-(q-1)^{-\frac{1}{p-1}}}.
\end{aligned}
\]
Hence
\[
E_p(\varphi)
\ge
q\left(1-(q-1)^{-\frac{1}{p-1}}\right)^{p-1}
=
c_{p,q}.
\]
Taking the infimum over all \(\varphi\in X_V(\Omega)\) such that
\(\varphi(o)=1\), we obtain
\[
\operatorname{Cap}_p(o,\Omega)\ge c_{p,q}.
\]
Since \(o\in\Omega\) was arbitrary,
\[
\min_{o\in\Omega}\operatorname{Cap}_p(o,\Omega)\ge c_{p,q}.
\]
Using the definition of \(\mathcal{R}_p(\Omega)\), we conclude that
\[
\mathcal{R}_p(\Omega)
=
\max_{o\in\Omega}\operatorname{Cap}_p(o,\Omega)^{-1}
\le
\frac{1}{c_{p,q}}.
\]
This proves \eqref{cap-RT}.
\end{proof}

\begin{proof}[Proof of Proposition~\ref{prop-tree-sharp}]
Throughout the proof, \(C>0\) denotes a generic constant independent of \(R\). Let \(C_*>c_{p,q}\) be fixed.

Let \(R\in\mathbb{N}\), and set
\[
\Omega_R=B(o,R).
\]
First, we compute the point \(p\)-capacity of \(o\) relative to \(\Omega_R\).  For
\(n\in\mathbb{N}_0\), set
\[
S_n=\{x\in V:\ d(o,x)=n\},
\]
and let \(\mathcal{E}_n\) be the set of oriented edges joining \(S_n\) to
\(S_{n+1}\).  Set
\[
A_R
=
\sum_{n=0}^{R}
(\#\mathcal{E}_n)^{-\frac{1}{p-1}}.
\]

We claim that
\begin{equation}\label{tree-cap-exact}
\operatorname{Cap}_p(o,\Omega_R)=A_R^{1-p}.
\end{equation}
Indeed, let \(\varphi\in X_V(\Omega_R)\) be such that \(\varphi(o)=1\). Since
\(\varphi=0\) in \(V\setminus\Omega_R\), we have \(\varphi=0\) in
\(S_{R+1}\). Repeating the argument used in the proof of
Proposition~\ref{prop-tree-cap}, with \(N=R+1\), gives
\[
E_p(\varphi)
\ge
\left(
\sum_{n=0}^{R}
(\#\mathcal{E}_n)^{-\frac{1}{p-1}}
\right)^{1-p}
=
A_R^{1-p}.
\]
Taking the infimum over all such \(\varphi\), we obtain
\begin{equation}\label{cap-pr1}
\operatorname{Cap}_p(o,\Omega_R)\ge A_R^{1-p}.
\end{equation}
Conversely, define
\[
\psi_R(x)
=
\frac{\displaystyle\sum_{k=d(o,x)}^{R}
(\#\mathcal{E}_k)^{-\frac{1}{p-1}}}
{\displaystyle\sum_{k=0}^{R}
(\#\mathcal{E}_k)^{-\frac{1}{p-1}}},
\qquad x\in\Omega_R,
\]
and set \(\psi_R=0\) in \(V\setminus\Omega_R\).  Then
\(\psi_R\in X_V(\Omega_R)\) and \(\psi_R(o)=1\).  Moreover, if
\((z,z')\in\mathcal{E}_n\), \(0\le n\le R\), then
\[
|\psi_R(z')-\psi_R(z)|
=
\frac{(\#\mathcal{E}_n)^{-\frac{1}{p-1}}}{A_R}.
\]
Therefore,
\[
\begin{aligned}
E_p(\psi_R)
&=
\sum_{n=0}^{R}
\sum_{(z,z')\in\mathcal{E}_n}
|\psi_R(z')-\psi_R(z)|^p  \\
&=
A_R^{-p}
\sum_{n=0}^{R}
(\#\mathcal{E}_n)^{-\frac{1}{p-1}}
=
A_R^{1-p}.
\end{aligned}
\]
Hence
\begin{equation}\label{cap-pr2}
\operatorname{Cap}_p(o,\Omega_R)\le A_R^{1-p}.
\end{equation}
Combining \eqref{cap-pr1} and \eqref{cap-pr2}, we obtain \eqref{tree-cap-exact}.

Next, we apply Lemma~\ref{lem-variational-localized-eigenvalue} with
\(\Omega=\Omega_R\) and \(A=\{o\}\). It gives a nontrivial function \(u_R\in X_V(\Omega_R)\) satisfying
\[
-\Delta_p^\omega u_R(x)
=
\lambda_{\{o\}}\mathbf{1}_{\{o\}}(x)|u_R(x)|^{p-2}u_R(x),
\qquad x\in\Omega_R,
\]
where
\[
\lambda_{\{o\}}
=
\inf_{\substack{u\in X_V(\Omega_R)\\ u(o)\neq 0}}
\frac{E_p(u)}{\mu(o)|u(o)|^p}.
\]
Thus, setting
\[
w_R(x)=\lambda_{\{o\}}\mathbf{1}_{\{o\}}(x),
\qquad x\in\Omega_R,
\]
we obtain a nonnegative potential \(w_R\) and a nontrivial solution \(u_R\) of
\eqref{PP} in \(\Omega_R\).

On the other hand, by the definition of the point capacity,
\[
\lambda_{\{o\}}
=
\frac{\operatorname{Cap}_p(o,\Omega_R)}{\mu(o)}.
\]
Therefore,
\[
\|w_R\|_{\ell^1(\Omega_R,\mu)}
=
\lambda_{\{o\}}\mu(o)
=
\operatorname{Cap}_p(o,\Omega_R).
\]
It remains to estimate this capacity. By \eqref{tree-cap-exact} and \eqref{ppt2}, 
\[
\operatorname{Cap}_p(o,\Omega_R)
=
\left(
q^{-\frac{1}{p-1}}
\sum_{n=0}^{R}
(q-1)^{-\frac{n}{p-1}}
\right)^{1-p}.
\]
As \(R\to\infty\),
\[
q^{-\frac{1}{p-1}}
\sum_{n=0}^{R}
(q-1)^{-\frac{n}{p-1}}
\longrightarrow
\frac{q^{-\frac{1}{p-1}}}
{1-(q-1)^{-\frac{1}{p-1}}}.
\]
Therefore,
\[
\operatorname{Cap}_p(o,\Omega_R)
\longrightarrow
q\left(1-(q-1)^{-\frac{1}{p-1}}\right)^{p-1}
=
c_{p,q}.
\]
Since \(C_*>c_{p,q}\), there exists \(R_0=R_0(C_*,p,q)>1\) such that, for
every \(R\in\mathbb{N}\) with \(R\ge R_0\),
\[
\|w_R\|_{\ell^1(\Omega_R),\mu)}
=
\operatorname{Cap}_p(o,\Omega_R)
<
C_*.
\]
This completes the proof.
\end{proof}

\end{document}